\pgfplotsset{compat=newest}
\newcommand{\separator}{
  \begin{center}
    \rule{\columnwidth}{0.3mm}
  \end{center}
}
\newcommand{\beq}{\begin{eqnarray*}}
\newcommand{\eeq}{\end{eqnarray*}}
\newcommand{\beqn}{\begin{eqnarray}}
\newcommand{\eeqn}{\end{eqnarray}}
\newcommand{\bemn}{\begin{multiline}}
\newcommand{\eemn}{\end{multiline}}
\def\N{\mathbb{N}}
\def\Q{\mathbb{Q}}
\def\Z{\mathbb{Z}}
\def\C{\mathbb{C}}
\newtheorem{teorema}{\bf Theorem}
\newtheorem{lema}{\bf Lemma}
\newtheorem{proposicion}{\bf Proposition}
\newtheorem{conjetura}{\bf Conjecture}
\numberwithin{teorema}{section}
\numberwithin{definicion}{section}
\numberwithin{lema}{section}
\numberwithin{conjetura}{section}
\numberwithin{proposicion}{section}
\def\sl{\mathfrak{sl}}
\def\Uh{\mathcal{U}_h}
\def\Uq{U_q}
\def\g{\mathfrak{g}}
\def\id{{\mathrm{id}}}
\def\okef{\overline{k_e}_{{}_1}}
\def\okes{\overline{k_e}_{{}_2}}
\def\okff{\overline{k_f}_{{}_1}}
\def\okfs{\overline{k_f}_{{}_2}}
\def\kfs{k_{f_2}}
\def\ketf{k_{e_{31}}}
\def\kets{k_{e_{32}}}
\def\kftf{k_{f_{31}}}
\def\kfts{k_{f_{32}}}
\def\onf{\overline{n_1}}
\def\ons{\overline{n_2}}
\newcommand{\qbinom}[2]{\left[\genfrac{}{}{0pt}{}{#1}{#2}\right]_q}
\newcommand{\cL}{\mathcal{L}}
\begin{document}

\title[A large color $R$-matrix for $\sl_3$]{A large color $R$-matrix for $\sl_3$}
\author{Angus Gruen and Lara San Mart\'in Su\'arez}
\date{}
\begin{abstract}
    We construct the invariant $F_K^{\sl_3}\in\mathbb{Z}[q,q^{-1}][[x,y]]$ for any positive braid knot $K$, whose existence was conjectured by Park, building on earlier work of Gukov--Manolescu. The main step in our work extends a result by the first author on the invariant associated to symmetric representations of $\sl_3$ to all irreducible representations. We conclude with a conjectural framework for constructing $F_K^{\sl_N}$ for arbitrary $N$.
\end{abstract}
\maketitle

\section{Introduction}

Ever since the introduction of Khovanov Homology \cite{Kh} which categorified the Jones polynomial, categorification of complex Chern--Simons theory has been one of the largest open problems in quantum topology. In a series of two papers, Gukov, Putrov and Vafa \cite{GPV} and Gukov, Pei, Putrov and Vafa \cite{GPPV} approach this open problem by introducing the $q$-series valued invariant of 3-manifolds $\hat{Z}$, such that the radial limits at $q$ root of unity recover the Witten--Reshetikhin--Turaev (WRT) invariants \cite{Witten,RT}, and conjectured the existence of a homology theory categorifying $\hat{Z}$. This invariant was initially defined for negative plumbed manifolds, and extending this invariant to arbitrary 3-manifolds is a current open research problem.

In an effort to generalize the definition of $\hat{Z}$, Gukov and Manolescu \cite{GM} introduced the knot invariant $F_K:=\hat{Z}(S^3\setminus K)$. This knot counterpart of $\hat{Z}$ is now a power series in two variables, $x$ and $q$, and can be related to $\hat{Z}$ for a large family of manifolds via Dehn surgery. Additionally, Gukov and Manolescu conjecture that $F_K$ can be obtained as a resummation of the Melvin--Morton--Rozansky expansion of the colored Jones polynomial.

\begin{conjetura}[Conjecture 1.5 in \cite{GM}] For every knot $K$ in $S^3$, there exists a two-variable series $$F_K(x,q)=\sum_{m=0}^\infty f_m(q)\, x^{m+\frac12}$$ with $f_m\in\mathbb{Z}[[q]][q^{-1}]$ that satisfies
\begin{equation*}
    F_K(x,e^\frac{h}{2})\sim\frac{1}{\Delta_K(x)}+\sum_{k=1}^\infty \frac{P_k(x)}{\Delta^{2k+1}_K(x)}\frac{h^k}{k!}
\end{equation*}
where the RHS is the Melvin--Morton--Rozansky expansion of the colored Jones polynomial \cite{MM,Roz,BNG} and $\sim$ indicates that both sides are related via Borel resummation.

Moreover, this series is annihilated by the quantum $A$-polynomial \cite{Gar,Guk05}
\begin{equation*}
    \hat{A}_k(\hat{x},\hat{y},q)F_K(x,q)=0
\end{equation*}
\end{conjetura}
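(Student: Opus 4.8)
The plan is to produce $F_K$ as the Markov trace of a state sum built from a \emph{large-color} $R$-matrix, and then to verify the two required properties — the Melvin--Morton--Rozansky (MMR) asymptotics and annihilation by $\hat{A}_K$ — separately. First I would fix a braid word $\beta$ whose closure is $K$, assign to each positive crossing the large-color $R$-matrix and to each negative crossing its inverse, and take the suitably normalized quantum trace. The output is a formal expression in the generic color variable $x=q^{n}$ and in $q$; the first task is to show that, after expansion, it lies in $x^{1/2}\,\Z[[q]][q^{-1}][[x]]$, i.e.\ that the coefficients $f_m(q)$ are genuine integral Laurent series. Because the $R$-matrix entries interpolate the finite-dimensional ones, specializing $x=q^{n}$ for $n\in\Z_{>0}$ must reproduce the colored Jones polynomial $J_K(n;q)$; establishing this interpolation identity rigorously is the anchor tying the formal object to classical invariants.

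Second, for the MMR statement I would set $q=e^{h/2}$, hold $x$ fixed, and analyze the $h$-expansion of the state sum. The Melvin--Morton--Rozansky theorem (in the form of Bar-Natan--Garoufalidis and Rozansky) already supplies the expansion $\sum_k P_k(x)/\Delta_K(x)^{2k+1}\,h^k/k!$ on the colored Jones side; the content here is that the two-variable series $F_K$ resums it in the Borel sense. I would prove the matching by showing that the leading $q\to 1$ behavior of the trace is $1/\Delta_K(x)$ — which follows from the abelianization of the $R$-matrix in the classical limit — and that each higher $h$-coefficient is the corresponding $P_k(x)/\Delta_K(x)^{2k+1}$. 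The Borel-summation clause then reduces to controlling the factorial growth of the MMR coefficients and identifying $F_K$ as their Borel sum, the step where precise analytic estimates on the state-sum coefficients are needed.

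Third, for the quantum $A$-polynomial I would invoke $q$-holonomicity: by Garoufalidis--L\^e the colored Jones function $n\mapsto J_K(n;q)$ is annihilated by a nonzero recursion operator $\hat{A}_K(\hat{x},\hat{y},q)$, with $\hat{x}$ acting as multiplication by $x$ and $\hat{y}$ as the shift $x\mapsto qx$. Since $\hat{A}_K F_K$ is again a two-variable series whose specialization at $x=q^{n}$ equals $\hat{A}_K J_K(n;q)=0$ for every $n>0$, one would deduce $\hat{A}_K F_K=0$ provided the interpolation is rigid enough to propagate vanishing off the integer lattice and provided $\hat{A}_K$ acts within $x^{1/2}\,\Z[[q]][q^{-1}][[x]]$. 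Equivalently, and more robustly, one can derive the recursion intrinsically from the skein and Yang--Baxter relations satisfied by the large-color $R$-matrix, bypassing the lattice-rigidity argument entirely.

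The hard part — and precisely the reason this paper settles the statement only for positive braid knots — is the treatment of \emph{negative} crossings for a general knot. The large-color $R$-matrix admits a clean power-series state sum, but its inverse does not expand as a series in $x$ with the correct support: for non-positive braids the naive trace lands in the wrong completion or fails to converge. Overcoming this will require either a genuinely two-sided (Laurent-type) completion in which both the $R$-matrix and its inverse act, together with a proof that the Markov moves preserve integrality and the $x^{1/2}$-leading behavior, or a separate resummation that reorganizes the negative-crossing contributions. Controlling this completion uniformly over all braid representatives of $K$ while simultaneously preserving integrality of the $f_m(q)$ is the central obstacle separating the general conjecture from the positive-braid case established here.
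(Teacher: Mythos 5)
You should first note what this statement is in the paper: it is stated as a \emph{Conjecture} (Conjecture 1.5 of \cite{GM}), and the paper offers no proof of it. The surrounding text attributes proofs of special cases to Park --- for positive braid knots in \cite{ParkLargeColor} and for knots admitting a signed braid diagram in \cite{Park21} --- and the general statement for every knot in $S^3$ remains open. So there is no proof in the paper to compare against line by line; the only fair comparison is between your proposal and the statement itself, together with the cited strategy that this paper builds on for its $\sl_3$ analogue (Theorem \ref{thm:FK-SU(3)}).

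Measured that way, your proposal is an accurate survey of the known strategy but not a proof, and the gaps are exactly the ones you yourself flag. (i) Convergence and integrality of the Markov trace for an \emph{arbitrary} braid word is the heart of the matter: your final paragraph concedes that the inverse $R$-matrix does not act within the relevant completion, and no two-sided completion or resummation is constructed --- this is precisely why the cited results stop at positive braids and signed braid diagrams, so your attempt reproduces the boundary of the known, rather than crossing it. (ii) Your MMR step conflates two different statements. Matching the classical limit with $1/\Delta_K(x)$ and the higher $h$-coefficients with $P_k(x)/\Delta_K(x)^{2k+1}$ order by order gives only the formal-expansion version (compare the equality in Theorem \ref{thm:FK-SU(3)}, which holds ``after expanding $x$, $y$ and $h$ near zero''), whereas the conjecture's $\sim$ asserts an analytic Borel-resummation relation at fixed $x$; you gesture at ``controlling factorial growth'' but supply no estimates, and nothing in the state-sum setup you describe produces them. (iii) For $\hat{A}_K F_K=0$, your main route --- deduce vanishing of the series from its vanishing at $x=q^n$ for all $n\in\Z_{>0}$ --- is exactly the ``lattice rigidity'' you admit is unproved: specializing a series in the completion $\Z[[q]][q^{-1}][[x]]$ at $x=q^n$ involves re-expansion, and these evaluations are not obviously separating, so the implication does not follow without additional structure; the proposed fallback (``derive the recursion intrinsically from skein and Yang--Baxter relations'') is a one-sentence hope, not an argument. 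In short: the architecture (large-color $R$-matrix state sum, specialization $x=q^n$ to recover the colored invariant, which is also the mechanism this paper uses in Lemma \ref{lemma:RT-inv} and the proof of Theorem \ref{thm:FK-SU(3)}) is the right one and matches the literature, but every step that distinguishes a proof of the conjecture from a restatement of it is left open.
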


In a series of papers, Park proved this conjecture first for positive braid knots \cite{ParkLargeColor}, and later for knots admitting a \textit{signed braid diagram} \cite{Park21}, by finding a description of $F_K$ in terms of suitably modified $R$-matrices.

As expected from its relation to $SU(2)$ complex Chern--Simons theory and the colored Jones polynomial, the invariant $F_K$ is associated with the choice of the Lie algebra $\sl_2$. In \cite{Park20}, Park extends the definition of $\hat{Z}$ and $F_K$ to any semisimple Lie algebra $\g$ and gives an explicit formula of $\hat{Z}^\g$ for negative plumbed manifolds and $F_K^\g$ for torus knots.

As we move outside of $\sl_2$ to any semisimple Lie algebra $\g$, the description of the $F_K$ invariant becomes more involved. The relation between $F_K$ and the colored Jones polynomial suggest that each element in the dominant integral weight basis of $\g$ corresponds to a new variable dependence of $F_K^\g$. In the case of $\g=\sl_2$, the dominant integral weights are in bijection with positive integers, which is why $F_K$ depends on the additional variable $x$. More generally in $\sl_N$, the dominant integral weights are in bijection with $N-1$ positive integers and so $F_K^{\sl_N}$ depends on $N$ variables, $x_1,\dots,x_{N-1}$ and $q$. The structure of $F_K^\g$ for other semisimple Lie algebra $\g$ can be argued analogously. 

In order to generalize the efforts made to define $F_K$ to $F_K^\g$, the simplest case was studied first. This gives rise to the definition of $F_K^{\sl_N,sym}$, which can be thought as a ``large color limit"\footnote{Here ``color" refers to the irreducible representation which we assign to each connected component of a knot diagram.} of the Jones polynomial associated to the tensor products of fundamental representations. This invariant was initially introduced in \cite{EGGKPS,Park20} as the restriction of $F_K^{\sl_N}$ to a particular one-dimensional subspace in the weight lattice\footnote{This can be achieved algebraically by setting all but one of the additional variables in $F_K^{\sl_N}$ to $1$.}, for which the first author later gave an $R$-matrix definition \cite{gruen22}, by constructing the associated Verma module in $\Uq(\sl_N)$ that gives rise to $F_K^{\sl_N,sym}$.

One major drawback of $F_K^{\sl_N,sym}$ is that, unlike $F_K^{\sl_N}$, there is no surgery formula relating $F_K^{\sl_N,sym}$ to higher rank $\hat{Z}$. Hence, these $F_K^{\sl_N,sym}$ should be thought of as a stepping stone on the way to defining the full invariant. The primary result of this paper is a construction of the full invariant in $\sl_3$ for positive braid knots, which proves Theorem \ref{thm:FK-SU(3)}, along with a conjecture for the construction of the full invariant in $\sl_N$.

\begin{teorema}[$F_K$ for $\sl_3$]\label{thm:FK-SU(3)} Let $\beta_K$  be a positive braid whose closure is the knot $K$. Then, the reduced quantum trace $\widetilde{\text{Tr}}^q_{V_x^{1}\otimes V_y^{2}}(\beta_K)$ converges in $\mathbb{Z}[q,q^{-1}][[x^{-1},y^{-1}]]$ and $$F_K^{\sl_3}(x,y,q)=\sum_{i,j\geq 0} f_{i,j}(q)\,x^{i+\frac12}y^{j+\frac12}$$ is a well-defined knot invariant that satisfies
    \begin{gather}\label{eq:thm-MMR}
        F_K^{\sl_3}(x,y,e^{\frac{h}{2}})=\sum_{j=0}^\infty \frac{P_j(K;x,y)}{\left(\Delta_K(x)\Delta_K(y)\Delta_K\left((xy)^{-1}\right)\right)^{2j+1}}\frac{h^j}{j!},
    \end{gather}
    where the R.H.S. is the perturbative expansion of the $\sl_3$ quantum knot invariant, such that $P_j(K;x,y)\in\Q[x,x^{-1},y,y^{-1}]$ and $P_0=1$. The equality holds after expanding $x$, $y$ and $h$ near zero.
    
    In particular, in the classical limit it recovers the Alexander polynomial
    \begin{equation}\label{eq:thm-alexander}
        \lim_{q\rightarrow 1} F_K^{\sl_3}(x,y,q)=\frac{1}{\Delta_K(x)\Delta_K(y)\Delta_K\left((xy)^{-1}\right)}.
    \end{equation}
\end{teorema}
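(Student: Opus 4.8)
The plan is to realize $F_K^{\sl_3}$ as a reduced quantum trace of the braid operator built from a ``large color'' $R$-matrix, in direct analogy with the first author's construction of $F_K^{\sl_N,sym}$, but now using the full two-parameter module $V_x^1\otimes V_y^2$. First I would construct the $\Uq(\sl_3)$-module $V_x^1\otimes V_y^2$, a tensor product of Verma-type modules whose highest weights are analytically continued so that $x$ and $y$ record the two fundamental-weight directions, and write down explicitly the braiding $\check R$ on $(V_x^1\otimes V_y^2)^{\otimes 2}$. The essential point, which is the main step flagged in the introduction, is to check that this $\check R$ is a well-defined operator extending the symmetric-representation $R$-matrix of \cite{gruen22} to all irreducibles, and that it satisfies the braid (Yang--Baxter) relation; then to each positive generator $\sigma_i$ of $\beta_K$ I assign $\check R$ and compose along the braid word to obtain an operator on $(V_x^1\otimes V_y^2)^{\otimes n}$.

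With the braid operator in hand, the next step is to take the reduced quantum trace and establish the analytic and topological claims. For convergence I would exploit positivity of the braid: because every crossing is positive, the braiding acts on the weight grading of $(V_x^1\otimes V_y^2)^{\otimes n}$ in an upper-triangular fashion, so that the coefficient of each monomial in the $x^{-1},y^{-1}$ expansion of the trace is a finite sum of terms in $\mathbb{Z}[q,q^{-1}]$; this places $\widetilde{\text{Tr}}^q_{V_x^{1}\otimes V_y^{2}}(\beta_K)$ in $\mathbb{Z}[q,q^{-1}][[x^{-1},y^{-1}]]$, and the half-integer framing shift reorganizes it into the claimed form $\sum_{i,j\ge 0}f_{i,j}(q)\,x^{i+\frac12}y^{j+\frac12}$. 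Knot invariance then reduces to the two Markov moves: invariance under conjugation (Markov I) is immediate from cyclicity of the trace together with the Yang--Baxter relation, while invariance under positive stabilization (Markov II) is exactly what the \emph{reduced} normalization of the quantum trace is designed to guarantee, via the standard partial-trace identity for $\check R$ and the ribbon element.

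For the Melvin--Morton--Rozansky expansion \eqref{eq:thm-MMR}, I would substitute $q=e^{h/2}$ and expand $\check R$, hence the whole trace, as a power series in $h$. The leading order $h^0$ is the semiclassical limit, in which $\Uq(\sl_3)$ degenerates and the braiding reduces to an $\sl_3$ analogue of the reduced Burau representation; the trace of the corresponding braid matrix computes the inverse of a determinant which factors, over the three positive roots of $\sl_3$, as $\big(\Delta_K(x)\Delta_K(y)\Delta_K((xy)^{-1})\big)^{-1}$, giving $P_0=1$ and in particular the classical limit \eqref{eq:thm-alexander}. Collecting the higher powers of $h$ and showing, by induction on the order, that the $h^j$ coefficient carries exactly $2j+1$ powers of this Alexander denominator with polynomial numerator $P_j\in\Q[x^{\pm1},y^{\pm1}]$ then matches the perturbative expansion of the $\sl_3$ quantum invariant term by term.

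The hard part will be the main step itself: constructing the non-symmetric large-color $R$-matrix and proving it is well-defined and convergent. Unlike the symmetric case, the weight spaces of $V_x^1\otimes V_y^2$ grow in two directions, so the matrix coefficients of $\check R$ are genuinely two-variable $q$-hypergeometric expressions, and verifying that the infinite sums defining the braiding and its trace actually lie in $\mathbb{Z}[q,q^{-1}][[x^{-1},y^{-1}]]$ — rather than in some larger completion with spurious denominators — requires delicate control of cancellations. Establishing the precise factorization of the semiclassical determinant into the three root contributions is the other place where care is needed, since it is what fixes the exponent $2j+1$ and pins down the Alexander normalization in \eqref{eq:thm-MMR}.
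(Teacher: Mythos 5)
Your construction coincides with the paper's: color a positive braid with $V_x^1\otimes V_y^2$, build the large-color $R$-matrix, and take the reduced quantum trace. But the way you propose to establish the theorem's claims diverges from the paper and contains genuine gaps. The most serious is knot invariance via Markov moves: the trace here is only defined (convergent) for \emph{positive} braids, and two positive braid presentations of the same knot are in general related through Markov moves that pass through non-positive braids; moreover, cyclicity and the partial-trace/ribbon identities you invoke for stabilization need separate justification for an infinite-dimensional Verma-type module. The paper sidesteps all of this with a single interpolation argument (Lemma \ref{lemma:RT-inv}): at $x=q^n$, $y=q^m$ the vector $|\textbf{0}\rangle$ generates $V^q_{(n,0)}\otimes V^q_{(0,m)}$, and because the closure is a knot, the reduced trace evaluated on this vector is automatically the Reshetikhin--Turaev invariant of $V^q_{(n,m)}$ (no projectors needed); since these specializations over all $n,m$ determine the coefficients $f_{i,j}(q)$, well-definedness, the expansion \eqref{eq:thm-MMR}, and the Alexander limit \eqref{eq:thm-alexander} all follow at once from known properties of the RT invariants. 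Your alternative route to \eqref{eq:thm-MMR} --- substitute $q=e^{h/2}$, identify a Burau-type determinant factoring over the three positive roots at order $h^0$, then induct on $j$ to force the denominator $\Delta^{2j+1}$ --- is not a routine induction: it amounts to reproving Rozansky-type rationality for $\sl_3$, a substantial theorem in its own right, and your sketch supplies no mechanism for it.

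Second, your convergence argument is too coarse. ``Upper-triangularity on the weight grading'' is not what happens: in the explicit $R$-matrix \eqref{eq:R-matrix}, each individual crossing contributes \emph{positive} and \emph{fractional} powers of $y$ (and of $x$), so the coefficient of a fixed monomial $x^{-i}y^{-j}$ in the trace is not visibly finite crossing-by-crossing. The paper's proof (Lemmas \ref{lemma:proof-1}--\ref{lemma:proof-3}) requires the conservation relations \eqref{eq:conservation-rels} together with a pairing of the contributions of the two crossings adjacent to each strand around the \emph{closed} braid, first to show the total exponents are integral and nonpositive, and then to show that each input state $\textbf{i}_k$ is damped by at least $x^{-\frac12(i_{k,1}+i_{k,4})}\,y^{-\frac12(i_{k,2}+i_{k,3})}$, so that only finitely many states (each with finitely many internal summations) contribute to a given monomial. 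You honestly flag ``delicate control of cancellations'' as the hard part, but the cancellation mechanism --- pairing across the closure, not positivity of the braid alone --- is precisely the missing idea, and it is also what restricts the whole construction to knots rather than links.
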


In order to prove Theorem \ref{thm:FK-SU(3)}, we derive a way to explicitly compute the quantum knot invariant associated to any finite-dimensional irreducible $\sl_3$ representation for any knot. Previous approaches to compute this invariant have involved cabling \cite{cabling} or certain combinatorial descriptions \cite{2-webs}. Our derivation also makes use of cabling but differs in the method by which to restrict to the target representation. Instead of using projectors, we take the reduced quantum trace\footnote{For a definition of the reduced quantum trace, see Figure \ref{fig:reduced-QT}.} of a braid diagram using appropriate coloring on the open strand. This approach restricts us to only working with knots (instead of more general braids); however, this is sufficient for our purposes.

In addition to providing an explicit $R$-matrix construction of the $F_K^{\sl_3}$ invariant, we also conjecture a framework for how to construct $F_K^{\sl_N}$ for general $N$, answering one of the questions posed in \cite{ParkLargeColor}. We believe that, in this framework, it should also be possible to extend some of the additional techniques from \cite{ParkLargeColor,Park21,gruen22} (such as the stratified and inverted state sums) to the $\sl_N$ case but we leave this for a future work.

As we will discuss later, one of the main difficulties of our approach is computational. The growth in the number of internal variables and summations becomes unsustainable for the computations of knots with a large number of crossings, as is usual in constructions defining knot invariants via cabling. For this reason, we only compute the trefoil and cinquefoil explicitly. However, as computational power grows, we believe that it would be possible to implement the computation of $F_K^{\sl_N}(x_1,\dots,x_{N-1},q)$ for any positive braid knot using this procedure.

\subsection*{Paper outline}

The paper is structured as follows. In Section \ref{sec:prerequisites} we review the necessary ingredients for constructing the quantum knot invariant associated with the $\sl_3$ Lie algebra. This part lays the groundwork for the study and establishes the conventions adopted throughout the paper.

Building on this foundation, in Section \ref{sec:construction} we revisit the Verma modules introduced by the first author in \cite{gruen22} and detail our construction, which centers on the action of the $R$-matrix on their tensor product. We provide an explicit formula for this $R$-matrix, study its symmetric limit, and use this formula to prove Theorem \ref{thm:FK-SU(3)}.

Next, in Section \ref{sec:examples} we demonstrate how to construct explicit examples for torus knots using the $R$-matrix formula and compare these results with those obtained by Park in \cite{Park20}.  Lastly, in Section \ref{sec:remarks-higher-rank} we discuss the extension of this construction to $\sl_N$ and examine the main challenges involved in such generalizations.

\subsection*{Conventions} 

Here we define the conventions we use for the quantum integer, quantum integer factorial and quantum binomial.
\begin{gather*}
    [n]_q=\frac{q^{\frac{n}{2}}-q^{-\frac{n}{2}}}{q^{\frac{1}{2}}-q^{-\frac{1}{2}}}, \hspace{2em} [k]_{x,q}=\left.[n-k]_q\right|_{q^n\rightarrow x}=\frac{x^\frac12q^{-\frac{k}{2}}-x^{-\frac12}q^\frac{k}{2}}{q^\frac12-q^{-\frac12}} \\
    \left(a;q\right){}_{n} = \prod_{i=0}^{n-1} (1-aq^i), \hspace{2em} \left(q\right){}_{n} := \left(q;q\right){}_{n} = \prod_{i=1}^n (1-q^i) \\
    [n]_q!:=\prod_{i=1}^n [i]_q, \hspace{2em} \qbinom{n}{k}:=\frac{\left(q\right){}_{n}}{\left(q\right){}_{k}\left(q\right){}_{n-k}} = q^{\frac{(n-k)k}{2}}\frac{[n]_q!}{[k]_q![n-k]_q!} \\
    \qbinom{n}{k_1\;k_2\;k_3}:=\frac{\left(q\right){}_{n}}{\left(q\right){}_{k_1}\left(q\right){}_{k_2}\left(q\right){}_{k_3}}
\end{gather*}

\subsection*{Acknowledgments}

L.S.M.S. acknowledges financial support by the mobility grants program of Centre de Formació Interdisciplinària Superior (CFIS) -- Universitat Politècnica de Catalunya (UPC).

\section{Prerequisites}\label{sec:prerequisites}

\subsection{Representations of \texorpdfstring{$\sl_3$}{sl3}}

We will denote by $\{\alpha_1,\alpha_2\}$ the set of simple roots and $\{\omega_1,\omega_2\}$ the fundamental weights. The Cartan matrix $C$ of $\sl_3$ takes the form
\begin{equation*}
    C = \begin{pmatrix}
    2 & -1 & -1\\
    -1 & 2 & -1\\
    -1 & -1 & 2
    \end{pmatrix}.
\end{equation*}

The Lie algebra $\sl_3=\sl(3,\C)$ can then be generated by the two $\sl_2$ triples $\{H_i,E_i,F_i\}_{i=1,2}$, satisfying the usual relations and
\begin{gather}
    \label{eq:sl3-H-relation} [H_i,H_j]=0 \\
    \label{eq:sl3-HE-relation1} [H_1,E_1]=2\,E_1, \hspace{0.5em} [H_1,E_2]=-E_2, \hspace{0.5em}
    [H_1,F_1]=-2\,F_1, \hspace{0.5em} [H_1,F_2]=F_2 \\
    \label{eq:sl3-HF-relation2} [H_2,E_1]=-E_1, \hspace{0.5em} [H_2,E_2]=2\,E_2, \hspace{0.5em}
    [H_2,F_1]=F_1, \hspace{0.5em} [H_2,F_2]=-2\,F_2 \\
    [E_i,F_j]=\delta_{ij}\,H_i \\ \label{eq:sl3-E-relation}
    \sum_{k=0}^2 (-1)^k \binom{2}{k} E_1^kE_2E_1^{2-k}=\sum_{k=0}^2 (-1)^k \binom{2}{k} E_2^kE_1E_2^{2-k}=0 \\
    \label{eq:sl3-F-relation} \sum_{k=0}^2 (-1)^k \binom{2}{k} F_1^kF_2F_1^{2-k}=\sum_{k=0}^2 (-1)^k \binom{2}{k} F_2^kF_1F_2^{2-k}=0 
\end{gather}

Given a representation of $\sl_3$, we say that an ordered pair $\mu=(n_1,n_2)$ is a weight for such representation if there exists a vector which is simultaneously a $H_i$-eigenvector of eigenvalue $n_i$ for $i=1,2$. Given two weights $\mu_1$ and $\mu_2$ of $\sl_3$, we say that $\mu_1$ is higher than $\mu_2$ if there exists non-negative integers $a,b$ such that $\mu_1-\mu_2=a\alpha_1+b\alpha_2$. A representation of $\sl_3$ is said to be of highest weight $\mu$ if for any other weight $\lambda$ for the representation, $\mu$ is higher than $\lambda$. There is a one-to-one correspondence between finite-dimensional irreducible representations of $\sl_3$ and pairs of non-negative integers $n,m\in\N^+$, such that $\mu=(n,m)$ is the highest weight associated to the representations.

Representations with $m=0$ (resp. $n=0$) are referred to as symmetric of degree $n$ (resp. $m$), and these arise as a particular example of the following infinite--dimensional representation. Define the map
\begin{equation}\label{eq:Ei-Fi-Hi}
    E_i \mapsto z_i\frac{\partial}{\partial z_{i+1}} \hspace{2em} F_i \mapsto z_{i+1}\frac{\partial}{\partial z_{i}} \hspace{2em} H_i \mapsto z_i\frac{\partial}{\partial z_{i}} - z_{i+1}\frac{\partial}{\partial z_{i+1}}
\end{equation}
It induces an action of $\sl_3$ on $\C\left[z_1,z_2,z_3\right]$ which decomposes as the direct sum of eigenspaces
\begin{equation*}
    \C\left[z_1,z_2,z_3\right] = \bigoplus_{n=0}^\infty V_{(n,0)},
\end{equation*}
where $V_{(n,m)}$ represents the $\sl_3$ irreducible representation of highest weight $(n,m)=n\,\omega_1+m\,\omega_2$. 

If the roles of the triples $\{H_1,E_1,F_1\}$ and $\{H_2,E_2,F_2\}$ are exchanged in \eqref{eq:Ei-Fi-Hi}, there is another map which gives rise to an action of $\sl_3$ on $\C\left[z_1,z_2,z_3\right]$ given by
\begin{alignat}{2}
    &E_1 \mapsto z_2\frac{\partial}{\partial z_3}, \hspace{3em} &&E_2\mapsto z_1\frac{\partial}{\partial z_2} \\
    &F_1 \mapsto z_3\frac{\partial}{\partial z_2}, \hspace{3em} &&F_2 \mapsto z_2\frac{\partial}{\partial z_1}\\
    &H_1 \mapsto z_2\frac{\partial}{\partial z_2}-z_3\frac{\partial}{\partial z_3}, \hspace{3.5em} &&H_2\mapsto z_1\frac{\partial}{\partial z_1}-z_2\frac{\partial}{\partial z_2}
\end{alignat}
such that $\C\left[z_1,z_2,z_3\right]$ decomposes instead as
\begin{equation*}
    \C\left[z_1,z_2,z_3\right] = \bigoplus_{m=0}^\infty V_{(0,m)}.
\end{equation*}
The symmetric representations $V_{(n,0)}$ and $V_{(0,m)}$ correspond under the previous action to the $\C$--eigenspace generated by the monomials of fixed degree $n$ and $m$ respectively.

Given two $\sl_3$ representations $V$ and $W$, there is an $\sl_3$ action on the tensor product $V\otimes W$ for which each element $X\in \sl_3$ acts by $X\otimes I + I\otimes X$.

\begin{lema}[Littlewood--Richardson rule, \cite{Littlewood-Richardson}]\label{lemma:LR-rule} Let $V_{(n,m)}$ denote the $\sl_3$ irreducible representation with highest weight $(n,m)$. Then,
    \begin{equation}\label{eq:tensorprod-decomposition-classical}
        V_{(n,0)}\otimes V_{(0,m)} \cong V_{(n,m)} \oplus \left(V_{(n-1,0)}\otimes V_{(0,m-1)} \right)
\end{equation}
\end{lema}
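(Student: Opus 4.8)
The plan is to work with the explicit polynomial realizations of the two symmetric representations and to establish the decomposition by exhibiting one summand as a concrete submodule, identifying the other as its complement, and finally confirming that nothing is missed with a dimension count. Since $\sl_3$ is semisimple, every finite-dimensional representation is completely reducible, so it suffices to produce a submodule isomorphic to $V_{(n-1,0)}\otimes V_{(0,m-1)}$ whose complement is the irreducible $V_{(n,m)}$. Throughout I realize $V_{(n,0)}$ as the degree-$n$ homogeneous polynomials in $z_1,z_2,z_3$ under the first action of \eqref{eq:Ei-Fi-Hi}, and $V_{(0,m)}$ as the degree-$m$ homogeneous polynomials in a separate set of variables $w_1,w_2,w_3$ under the exchanged action; the tensor product is then spanned by products of such monomials, and each $X\in\sl_3$ acts by $X\otimes I+I\otimes X$.

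The key construction is an $\sl_3$-invariant element $\Omega\in V_{(1,0)}\otimes V_{(0,1)}$. Matching each weight of the standard representation with its negative in the dual and solving $E_1\cdot\Omega=E_2\cdot\Omega=0$ singles out, up to scale, the contraction
\[
    \Omega = z_1 w_3 - z_2 w_2 + z_3 w_1 ,
\]
which has weight $(0,0)$ and is annihilated by the raising operators, hence spans a trivial subrepresentation. Because $\Omega$ is invariant, multiplication by $\Omega$ intertwines the action: for any $X\in\sl_3$ one has $X\cdot(\Omega\, p)=(X\cdot\Omega)\,p+\Omega\,(X\cdot p)=\Omega\,(X\cdot p)$. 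Thus multiplication by $\Omega$ defines an $\sl_3$-equivariant map
\[
    V_{(n-1,0)}\otimes V_{(0,m-1)} \longrightarrow V_{(n,0)}\otimes V_{(0,m)}
\]
raising bidegree from $(n-1,m-1)$ to $(n,m)$. This map is injective, since multiplication by the nonzero polynomial $\Omega$ is injective on the polynomial ring, so its image $W$ is a submodule isomorphic to $V_{(n-1,0)}\otimes V_{(0,m-1)}$.

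It remains to identify a complement of $W$ with $V_{(n,m)}$. The top weight occurring in the tensor product is $(n,0)+(0,m)=(n,m)$, realized by the single highest weight vector $z_1^n\otimes w_1^m$, so $V_{(n,m)}$ appears with multiplicity exactly one. Its highest weight is strictly higher than $(n-1,m-1)$, the top weight of $W$, so $V_{(n,m)}$ cannot be a constituent of $W$; by complete reducibility it therefore lies in a complement. Finally, the Weyl dimension formula gives $\dim V_{(a,b)}=\tfrac12(a+1)(b+1)(a+b+2)$, and a direct check shows
\[
    \tfrac12(n+1)(m+1)(n+m+2) + \tfrac{n(n+1)m(m+1)}{4} = \tfrac{(n+1)(n+2)(m+1)(m+2)}{4} = \dim\bigl(V_{(n,0)}\otimes V_{(0,m)}\bigr),
\]
so $\dim V_{(n,m)}+\dim W$ already accounts for the entire tensor product. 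Hence the complement of $W$ is precisely $V_{(n,m)}$, which proves \eqref{eq:tensorprod-decomposition-classical}.

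The only genuinely delicate point is producing the equivariant embedding, that is, recognizing that the invariant contraction $\Omega$ converts the lower tensor product into an honest submodule; once this is in hand the weight and dimension bookkeeping is routine. An alternative, equally short route avoids $\Omega$ altogether and instead verifies the character identity $\chi_{(n,0)}\chi_{(0,m)}=\chi_{(n,m)}+\chi_{(n-1,0)}\chi_{(0,m-1)}$ directly from the Weyl character formula, concluding by the linear independence of irreducible characters; I would keep this in reserve as a cross-check.
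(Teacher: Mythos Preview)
Your argument is correct. The paper does not actually supply a proof of this lemma---it simply cites the general Littlewood--Richardson rule---so there is no in-paper argument to compare against. Your approach is a clean, self-contained proof tailored to this special case: the explicit invariant $\Omega=z_1w_3-z_2w_2+z_3w_1$ (which one checks is annihilated by all of $E_i,F_i,H_i$ under the polynomial realizations of \eqref{eq:Ei-Fi-Hi} and its exchanged version) makes multiplication by $\Omega$ an equivariant injection $V_{(n-1,0)}\otimes V_{(0,m-1)}\hookrightarrow V_{(n,0)}\otimes V_{(0,m)}$ via the Leibniz rule, and the highest-weight plus dimension count pins down the complement as $V_{(n,m)}$. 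The dimension identity you wrote is correct. Compared with invoking the full Littlewood--Richardson machinery, your argument is more elementary and has the bonus of being constructive, which matches the spirit of how the paper later uses this decomposition (via the highest-weight vector in the tensor product). The only cosmetic point is the boundary case $n=0$ or $m=0$, where the right-hand summand should be read as zero; your dimension formula handles this automatically.
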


As a result, the irreducible representation $V_{(n,m)}$ can be constructed as the subrepresentation of $V_{(n,0)}\otimes V_{(0,m)}$ generated by the vector with weight $(n,m)$.

\subsection{Representations of \texorpdfstring{$\Uq(\sl_3)$}{Uqsl3}}

The quantum enveloping algebra $\Uq(\sl_3)$ is the Hopf algebra generated by the two triples $\{K_i,E_i,F_i\}_{i=1,2}$ satisfying the relations
\begin{gather}
    \label{eq:Uhsl3-K-relation} K_iK_j=K_jK_i, \qquad K_iK_i^{-1}=K_i^{-1}K_i=1 \\
    \label{eq:Uhsl3-KE-relation1} K_1E_1=qE_1K_1, \hspace{0.5em} K_1E_2=q^{-\frac12}E_2K_1, \hspace{0.5em}
    K_1F_1=q^{-1}F_1K_1, \hspace{0.5em} K_1F_2=q^\frac12F_2K_1 \\
    \label{eq:Uhsl3-KE-relation2} K_2E_1=q^{-\frac12}E_1K_2, \hspace{0.5em} K_2E_2=qE_2K_2, \hspace{0.5em}
    K_2F_1=q^\frac12F_1K_2, \hspace{0.5em} K_2F_2=q^{-1}F_2K_2 \\
    [E_i,F_j]=\delta_{ij}\frac{K_i-K_i^{-1}}{q^\frac12-q^{-\frac12}} \\ \label{eq:Uhsl3-E-relation}
    \sum_{k=0}^2 (-1)^k \qbinom{2}{k} E_1^kE_2E_1^{2-k}=\sum_{k=0}^2 (-1)^k \qbinom{2}{k} E_2^kE_1E_2^{2-k}=0 \\
    \label{eq:Uhsl3-F-relation} \sum_{k=0}^2 (-1)^k \qbinom{2}{k} F_1^kF_2F_1^{2-k}=\sum_{k=0}^2 (-1)^k \qbinom{2}{k} F_2^kF_1F_2^{2-k}=0 
\end{gather}
The comultiplication $\Delta$, counit $\varepsilon$ and antipode $S$ are
\begin{alignat}{6}
    \label{eq:Uhsl3-structure-1} &\Delta(K_i) &&= K_i\otimes K_i, \hspace{1em} &&\varepsilon(K_i) &&= 1, \hspace{1em} &&S(K_i)&&=K_i^{-1} \\
    &\Delta(E_i) &&= E_i\otimes K_i + 1\otimes E_i, \hspace{1em} &&\varepsilon(E_i) &&= 0, \hspace{1em} &&S(E_i)&&=-E_iK_i^{-1} \\
    \label{eq:Uhsl3-structure-3} &\Delta(F_i) &&= F_i\otimes 1 + K_i^{-1}\otimes F_i, \hspace{2em} &&\varepsilon(F_i) &&= 0, \hspace{2em} &&S(F_i)&&=-K_iF_i
\end{alignat}

Let $\Uh(\sl_3)$ denote the ribbon Hopf algebra generated by $\{H_i,E_i,F_i\}_{i=1,2}$ under the substitution $q=e^{h}$ and $K_i=q^{\frac{H_i}{2}}=e^{\frac{h H_i}{2}}$. Note that $\Uq(\sl_3)$ is the subalgebra of $\Uh(\sl_3)$ generated by $\{q^{\frac{H_i}{2}},E_i,F_i\}_{i=1,2}$.

The universal $R$-matrix of $\Uh(\sl_3)$ was derived in \cite{Bur} and takes the form

\begin{equation}\label{eq:Bur-formula}
    R_{\sl_3} = q^{\sum_{i,j=1}^2 a_{ij}^{-1} H_i\otimes H_j} \prod_{\alpha\in\Delta_+} \left(\sum_{n_\alpha=1}^\infty \frac{q^\frac{n_\alpha(n_\alpha-1)}{4}(1-q^{-1})^{n_\alpha}}{[n_\alpha]_q!} \left(K_\alpha^\frac{1}{2}E_\alpha\otimes K_\alpha^{-\frac{1}{2}}F_\alpha\right)^{n_\alpha} \right),
\end{equation}
where $\Delta_+=\{\alpha_1,\alpha_2,\alpha_3:=\alpha_1+\alpha_2\}$ is the set of positive roots and we are identifying the generators $\{K_i,E_i,F_i\}$ with $\{K_{\alpha_i},E_{\alpha_i},F_{\alpha_i}\}$. The generators associated to the third root $\alpha_3$ are defined as
\begin{equation*}
    E_3:=q^{1/2}E_1E_2-q^{-1/2}E_2E_1, \hspace{2em} F_3:=q^{1/2}F_1F_2-q^{-1/2}F_2F_1.
\end{equation*}

Note that $R_{\sl_3}$ is an element of the completion of $\Uh(\sl_3)\,\otimes\,\Uh(\sl_3)$. Nonetheless, the action of $R_{\sl_3}$ on a finite-dimensional $\Uq(\sl_3)$ module is well-defined and induces a braiding on its category of modules. Since we work exclusively with $\Uq(\sl_3)$ representations, we will use the term \textit{module} and \textit{representation} interchangeably throughout the paper.

The representation theory of $\Uq(\sl_3)$ is parallel to that of $\sl_3$. The finite-dimensional $\Uq(\sl_3)$ representations over $\C(q^\frac16)$ are classified by their highest-weight $\widetilde{\mu}$, which takes the form $\widetilde{\mu} = (\varepsilon_1 q^{n_1},\varepsilon_2 q^{n_2})$ with $\varepsilon_i=\pm 1$, $n_i\in\N^+$ for $i=1,2$. 
If we denote by $V^q_{(n_1,n_2,\varepsilon_1,\varepsilon_2)}$ the finite-dimensional irreducible representation of highest weight $(\varepsilon_1 q^{n_1},\varepsilon_2 q^{n_2})$, the relation $$V^q_{(n_1,n_2,1,1)}\otimes V^q_{(0,0,\varepsilon_1,\varepsilon_2)} \cong V^q_{(n_1,n_2,\varepsilon_1,\varepsilon_2)}$$ allows to reduce the study of all finite-dimensional irreducibles to those of the form $V^q_{(n_1,n_2, 1, 1)}$. From now on, we will denote by $V^q_{(n_1,n_2)}$ the highest-weight finite-dimensional representation $V^q_{(n_1,n_2, 1, 1)}$.

\subsubsection{Quantization of the symmetric representations}

Following \cite{gruen22}, the symmetric representations of $\Uh(\sl_3)$ can be described by the quantization of the $\sl_3$ symmetric representations. Define 
\begin{equation*}
    \left(\frac{\partial f}{\partial z}\right)_q := \frac{f\left(q^\frac{1}{2}z\right)-f\left(q^{-\frac{1}{2}}z\right)}{q^\frac{1}{2}z-q^{-\frac{1}{2}}z}.
\end{equation*}
to be a suitably normalized $q$--derivative of the function $f$. Then, anagolously to the classical case, the map 
\begin{equation}\label{eq:qEi-Fi-Hi}
    E_i \mapsto z_i\left(\frac{\partial}{\partial z_{i+1}}\right)_q \hspace{1.5em} F_i \mapsto z_{i+1}\left(\frac{\partial}{\partial z_{i}}\right)_q \hspace{1.5em} H_i \mapsto z_i\left(\frac{\partial}{\partial z_{i}}\right)_q - z_{i+1}\left(\frac{\partial}{\partial z_{i+1}}\right)_q,
\end{equation}
with $K_i = q^{\frac{H_i}{2}}$, induces an action of $\Uq(\sl_3)$ on $\C(q^{\frac{1}{2}})[z_1,z_2,z_3]$ which decomposes as
\begin{equation*}
    \C(q^{\frac{1}{2}})[z_1,z_2,z_3] = \bigoplus_{n=0}^{\infty} V^q_{(n,0)}.
\end{equation*}
As before, interchanging the labels of the generators decomposes $\C(q^{\frac{1}{2}})[z_1,z_2,z_3]$ in the direct sum of representations of the form $V^q_{(0,m)}$.

Given two $\Uq(\sl_3)$ representations $V$ and $W$, the induced $\Uq(\sl_3)$ action on the tensor product $V\otimes W$ is given by the comultiplication map. The correspondence between the decomposition of the tensor product of $\sl_3$ and $\Uq(\sl_3)$ representations in simple representations \cite{KRT-book} gives the analog of Lemma \ref{lemma:LR-rule} in the quantized case.

\begin{lema}[Littlewood--Richardson rule for $\Uq(\sl_3)$]\label{lemma:LR-rule-Uqsl3} Let $V^q_{(n,m)}$ denote the $\Uq(\sl_3)$ irreducible representation with highest weight $(q^n,q^m)$. Then,
    \begin{equation}\label{eq:tensorprod-decomposition-quantum}
        V^q_{(n,0)}\otimes V^q_{(0,m)} \cong V^q_{(n,m)} \oplus \left(V^q_{(n-1,0)}\otimes V^q_{(0,m-1)} \right)
\end{equation}
\end{lema}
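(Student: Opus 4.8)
The statement I want to prove is the quantum Littlewood--Richardson decomposition
\[
V^q_{(n,0)}\otimes V^q_{(0,m)} \cong V^q_{(n,m)} \oplus \left(V^q_{(n-1,0)}\otimes V^q_{(0,m-1)}\right),
\]
and the cleanest route exploits the fact that the paper has already established the classical analogue (Lemma \ref{lemma:LR-rule}) together with the remark, attributed to \cite{KRT-book}, that the tensor-product decomposition of $\Uq(\sl_3)$ representations into simples matches the classical one. So the plan is \emph{not} to redo the combinatorics from scratch, but to transfer the classical statement across the quantization. First I would record that $V^q_{(n,0)}$ and $V^q_{(0,m)}$ are the finite-dimensional irreducibles obtained from the $q$-deformed actions in \eqref{eq:qEi-Fi-Hi}; since $q$ is generic (we work over $\C(q^{1/2})$, not at a root of unity), every finite-dimensional $\Uq(\sl_3)$ module is completely reducible and the category of such modules is semisimple. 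Second, I would invoke the classification: the simple constituents are indexed by the same highest weights as in the classical case, and the multiplicity of each simple in a tensor product is governed by the same integers.

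**Main steps.**
The argument then runs as follows. Taking formal characters (or equivalently, comparing dimensions of weight spaces, which are unchanged by the deformation since the $H_i$-eigenvalues in \eqref{eq:qEi-Fi-Hi} coincide with the classical ones of \eqref{eq:Ei-Fi-Hi}), the character of $V^q_{(n,0)}\otimes V^q_{(0,m)}$ equals $\operatorname{ch}V_{(n,0)}\cdot\operatorname{ch}V_{(0,m)}$, which by Lemma \ref{lemma:LR-rule} decomposes as $\operatorname{ch}V_{(n,m)}+\operatorname{ch}\!\left(V_{(n-1,0)}\otimes V_{(0,m-1)}\right)$. Because the quantum category is semisimple and its simples are classified by highest weight exactly as classically, equality of characters forces an isomorphism of modules, giving \eqref{eq:tensorprod-decomposition-quantum}. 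Alternatively, and more in the spirit of the later sections, one could argue directly: the highest-weight vector $z_1^n\otimes z_3^m$ (in the two realizations on polynomial rings) has weight $(q^n,q^m)$, is annihilated by $E_1,E_2$, and hence generates a copy of $V^q_{(n,m)}$ inside the tensor product; the orthogonal complement, obtained by peeling off this top piece, is then identified with $V^q_{(n-1,0)}\otimes V^q_{(0,m-1)}$ by an explicit weight-space count or by induction on $n+m$.

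**The obstacle.**
The only genuinely non-formal point is justifying that the quantum decomposition multiplicities equal the classical ones, i.e.\ that no ``quantum'' phenomenon (such as a collision of weights or a failure of semisimplicity) intervenes. This is exactly where genericity of $q$ is essential and where the cited result \cite{KRT-book} does the real work; at a root of unity the statement would fail. I would therefore make the semisimplicity and the weight-for-weight matching of characters explicit, since that is the load-bearing hypothesis, and treat the combinatorial identity of characters itself as a direct consequence of the already-proved Lemma \ref{lemma:LR-rule} rather than reproving it. The remaining verifications---that $z_1^n\otimes z_3^m$ is a highest-weight vector of the stated weight and that the leftover module has the correct character---are routine given the explicit formulas in \eqref{eq:qEi-Fi-Hi} and the comultiplication \eqref{eq:Uhsl3-structure-1}--\eqref{eq:Uhsl3-structure-3}, so I would not grind through them.
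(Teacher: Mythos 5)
Your proposal is correct and takes essentially the same route as the paper, which disposes of this lemma in a single sentence by citing the correspondence \cite{KRT-book} between classical and quantum tensor-product decompositions into simples at generic $q$ and then applying the classical Lemma \ref{lemma:LR-rule}; your semisimplicity-plus-character argument is precisely that correspondence made explicit. (One minor slip in your alternative direct sketch: in the second polynomial realization the highest-weight vector of $V^q_{(0,m)}$ is $z_1^m$, not $z_3^m$, which is the lowest-weight vector there, so the generating vector should be $z_1^n\otimes z_1^m$.)
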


\subsubsection{Evaluation and coevaluation maps}

Given a representation $V$ of $\Uq(\sl_3)$, there are two natural choices for the dual representation, the right dual $V^*$ and left dual ${}^*V$, both of which are isomorphic as vector spaces to the dual vector space of $V$. Given $v\in V$ and $a\in\Uq(\sl_3)$, the action on $V^*$ and ${}^*V$ is given by
\begin{alignat*}{3}
    &(a\cdot \alpha)(v) &&= \alpha(S(a)\cdot v) \hspace{2em} &\text{on } V^*\\
    &(a\cdot \alpha)(v) &&= \alpha(S^{-1}(a)\cdot v) \hspace{2em} & \text{on } {}^*V 
\end{alignat*}

The map $\phi_V:\alpha(\cdot)\mapsto \alpha(u\,\cdot)$ is an isomorphism between ${}^*V$ and $V^*$, where 
\begin{equation*}
    u = K_1K_2.
\end{equation*}
Define the evaluation and coevaluation maps of $\Uq(\sl_3)$ as
\begin{alignat*}{2}
    \overleftarrow{ev}_V:\,&V^*\otimes V\rightarrow \C \hspace{1em}\text{ and }\hspace{1em} \overrightarrow{coev}_V:\,&&\C \rightarrow V\otimes V^* \\
    &v_i\otimes v^j\mapsto \delta_{ij} && 1\mapsto \sum_i v_i\otimes v^i
\end{alignat*}
and
\begin{alignat*}{2}
    \overrightarrow{ev}_V:\,&V\otimes V^*\rightarrow \C \hspace{1em}\text{ and }\hspace{1em} \overleftarrow{coev}_V:\,&&\C \rightarrow V^*\otimes V \\
    &v^i\otimes v_j\mapsto u_i\,\delta_{ij} && 1\mapsto \sum_i u_i\,v^i\otimes v_i
\end{alignat*}
where $\{v_i\}_i$ is a basis of $\{K_j\}_{j=1,2}$-eigenvectors of the representation $V$ and $\{v^i\}_i$ the associated dual basis. The element $u_i$ is the scalar such that $u\cdot v_i = u_i v_i$.

\subsection{Quantum knot invariants}

Consider the $n$-strand braid group
\begin{equation*}\label{eq:Bn}
    B_n = \left\{ \sigma_1, \dots, \sigma_{n-1} \;\middle|\;
    \begin{alignedat}{2}
        &\sigma_i\sigma_j = \sigma_j\sigma_i \hspace{6em} \text{if } |i-j|>1, \, &&i,j=1,\dots,n-1 \\
        &\sigma_{i}\sigma_{i+1}\sigma_i=\sigma_{i+1}\sigma_{i}\sigma_{i+1} &&i,j=1,\dots,n-1
    \end{alignedat}
    \right\}.
\end{equation*}

Let $V$ be a finite-dimensional irreducible $\Uq(\sl_3)$ representation. Denote by $P$ the flip map of $V\otimes V$ and by $\widetilde{R}$ the action of the $\Uq(\sl_3)$ R-matrix \eqref{eq:Bur-formula} on $V$. Then, the matrix $R = P\widetilde{R}$ solves the quantum Yang-Baxter equation
\begin{equation}\label{eq:QYB}
    R_{23}R_{12}R_{23}=R_{12}R_{23}R_{12},
\end{equation}
where $R_{ij}$ is induced by the action of $\widetilde{R}$ on the $i^{\text{th}}$ and $j^\text{th}$ components of $V\otimes V\otimes V$.
Thus, the matrix $R$ provides representations of the braid group via $\sigma_i\mapsto R_{i,i+1}$, where $\sigma_i$ denotes the braid group $i^\text{th}$ generator as in \eqref{eq:Bn}.

Define the writhe of the braid $\beta_\cL$ as
\begin{equation*}
    \omega(\beta_\cL):=|\beta_{\cL,+}|-|\beta_{\cL,-}|
\end{equation*}
where $|\beta_{\cL,\pm}|$ is the number of positive and negative crossings respectively. The quantum link invariant associated to the oriented link $\cL$ is
\begin{equation*}
    p_V(\cL;q)=f_V(q)^{-\omega(\beta_\cL)} \, \text{Tr}^q_V(\beta_\cL;q)
\end{equation*}
where $\text{Tr}^q_V(\beta_\cL;q)$ is the quantum trace of the representation induced by $V$ on a braid diagram $\beta_\cL$, and $f_V(q)$ is a correction factor for the Reidemeister move 1.

In practice, these invariants are computed using the reduced or normalised version. The reduced quantum trace of $\beta_\cL$ is computed as the quantum trace of $\beta_\cL'$, the restriction of $\beta_\cL$ to a subspace of $V^{\otimes n}$ which fixes one of the components. Graphically, this is equivalent to leaving the fixed strand open and closing the rest (Figure \ref{fig:reduced-QT}). When the strands are colored by a finite-dimensional irreducible representation $V$, the resulting map is central, making the invariant independent of the choice of color of the open strand\footnote{Computing the reduced quantum trace will be beneficial in practice, as fixing one of the colors in the braid fixes some of the colors of other arches and results in fewer summations.}. Denote the reduced quantum trace on a link $\mathcal{L}$ with braid diagram $\beta_\cL$ colored by the representation $V$ as
\begin{equation*}
	\widetilde{\text{Tr}}^q_V (\beta_\cL;q):=\text{Tr}^q_V(\beta_\cL';q).
\end{equation*} 

\begin{figure}
    \centering
    \includegraphics[width=0.225\linewidth]{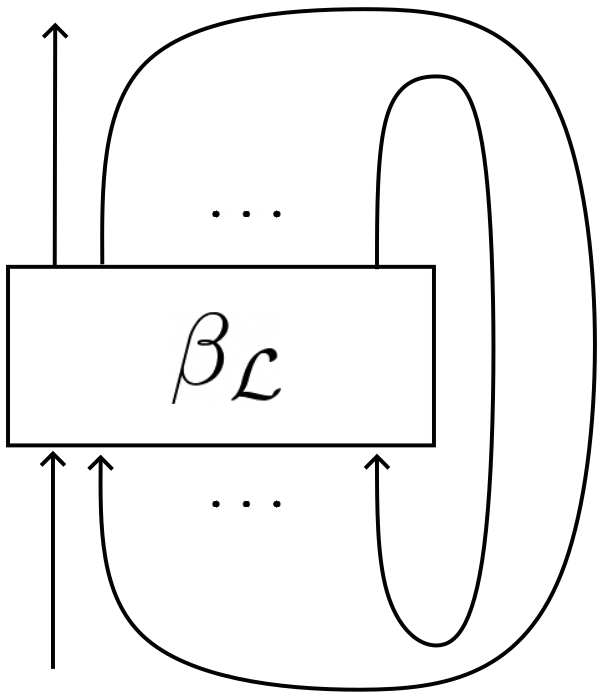}
    \caption{Reduced quantum trace of the braid $\beta_\mathcal{L}$. By convention, the left-most strand is left open, when reading the braid from the bottom up.}
    \label{fig:reduced-QT}
\end{figure}

The action in \eqref{eq:qEi-Fi-Hi} can be used to construct the invariant associated to symmetric representations. For a general irreducible representation, we can use Lemma \ref{lemma:LR-rule-Uqsl3}.

\begin{lema}\label{lemma:RT-inv} Given a braid diagram of a knot $K$, compute the reduced quantum trace over the representation $V_{(n,0)}\otimes V_{(0,m)}$ and evaluate the resulting morphism on the element with weight $(n,m)$. The result is the $\sl_3$ Reshetikhin--Turaev invariant \cite{RT} associated to the finite-dimensional irreducible representation of highest weight $(n,m)$.
\end{lema}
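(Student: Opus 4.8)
The plan is to combine the full decomposition of the tensor product with the naturality of Reshetikhin--Turaev $(1,1)$-tangle invariants. Iterating the quantum Littlewood--Richardson rule (Lemma~\ref{lemma:LR-rule-Uqsl3}) gives the multiplicity-free decomposition
$$V^q_{(n,0)}\otimes V^q_{(0,m)} \cong \bigoplus_{k=0}^{\min(n,m)} V^q_{(n-k,m-k)}$$
into pairwise non-isomorphic irreducibles. Since $(n,m)$ is the highest weight appearing anywhere in the tensor product, the weight-$(n,m)$ subspace is one-dimensional and coincides with the highest-weight line of the top summand $V^q_{(n,m)}$. Consequently, evaluating any $\Uq(\sl_3)$-module endomorphism on a vector of weight $(n,m)$ reads off precisely the scalar by which it acts on the $V^q_{(n,m)}$ summand.

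Because $K$ is a knot, all strands of the braid lie on the single component, so leaving one strand open and closing the rest produces a $(1,1)$-tangle $T$. Applying the Reshetikhin--Turaev functor with the open strand colored by $W$ gives an endomorphism $F(T)_W\in\End_{\Uq(\sl_3)}(W)$; when $W$ is irreducible this is a scalar by Schur's lemma, which by definition is the reduced Reshetikhin--Turaev invariant of $K$ in color $W$. The crucial input is that $W\mapsto F(T)_W$ is natural in $W$: for any module map $f\colon W\to W'$ one has $f\circ F(T)_W = F(T)_{W'}\circ f$, a consequence of the naturality of the braiding and of the (co)evaluation maps from which $T$ is assembled.

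Applying naturality to the summand inclusion $\iota\colon V^q_{(n,m)}\hookrightarrow V^q_{(n,0)}\otimes V^q_{(0,m)}$ yields $\phi\circ\iota = \iota\circ F(T)_{V^q_{(n,m)}} = c\,\iota$, where $\phi := F(T)_{V^q_{(n,0)}\otimes V^q_{(0,m)}}$ is the reduced quantum trace of the cabled braid and $c$ is the invariant in color $V^q_{(n,m)}$. Thus $\phi$ acts on the $V^q_{(n,m)}$ summand by the scalar $c$, and by the weight analysis of the first paragraph, evaluating $\phi$ on the weight-$(n,m)$ vector returns exactly $c$. This is the asserted identity.

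The main obstacle I expect is making the naturality step precise. One must verify that the image of the $(1,1)$-tangle under the Reshetikhin--Turaev functor genuinely defines a natural endomorphism of the identity functor, and that the summand inclusion $\iota$ is a $\Uq(\sl_3)$-module map compatible with the cabling used to present $V^q_{(n,m)}$ inside $V^q_{(n,0)}\otimes V^q_{(0,m)}$ crossing by crossing. Semisimplicity of the category of finite-dimensional $\Uq(\sl_3)$-modules at generic $q$ guarantees both Schur's lemma and the required splitting, but one should still check that the framing normalization is handled consistently between the two colorings, so that the scalar extracted by the weight-$(n,m)$ evaluation matches the normalized invariant on the nose.
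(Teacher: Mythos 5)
Your proposal is correct and takes essentially the same route as the paper: the paper's (much terser) proof also evaluates the module endomorphism obtained from the partially closed braid on the weight-$(n,m)$ vector, and its appeal to ``the maps induced by the braid are morphisms, so every strand is colored by the irreducible component $V^q_{(n,m)}$'' is exactly the naturality-plus-Schur argument you spell out via the summand inclusion $\iota\colon V^q_{(n,m)}\hookrightarrow V^q_{(n,0)}\otimes V^q_{(0,m)}$. Your write-up merely makes explicit (multiplicity-free decomposition, one-dimensionality of the weight-$(n,m)$ space, naturality of the $(1,1)$-tangle endomorphism) what the paper leaves implicit.
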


\begin{proof} Let $\beta_K\in B_k$ be a braid whose closure is $K$. Color the strands of $\beta_K$ by the $\Uh(\sl_3)$ representation $V_{(n,0)}\otimes V_{(0,m)}$. Take the reduced quantum trace by leaving the left-most strand open and evaluate the highest weight vector on the open strand. Since the closure of $\beta_K$ is a knot and the maps induced by the braid in the representation category of are morphisms, every strand will be colored by the irreducible component spanned by the eigenvector with eigenvalue $(n,m)$, which corresponds to $V_{(n,m)}$, and the resulting trace is an endomorphism of $V_{(n,m)}$ coinciding with the $\sl_3$ quantum knot invariant associated to $V_{(n,m)}$.
\end{proof}

\section{\texorpdfstring{The large color $R$-matrix}{The large color R-matrix}}\label{sec:construction}

\subsection{Symmetric Verma representations}

It was shown in \cite{gruen22} that there exist $\Uq(\sl_3)$ Verma modules constructing the invariant $F_K^{\sl_3,sym}$ for a variety of knots, including positive braid knots. In analogy with the finite-dimensional case, the goal is to compute the action of $\sl_3$ on the tensor product of such representations. 

The Verma module denoted $V_x^1$ is generated by the basis
\begin{equation*}
    \{v_a:=|a_1,a_2\rangle \,|\, a_1\geq a_2\geq 0 \},
\end{equation*}
and actions given by
\begin{alignat}{3}
    \label{eq:Uhsl3-action-1-1} & K_1\cdot v_a = x^\frac12\,q^\frac{-2a_1+a_2}{2}\,v_a, \quad && E_1\cdot v_a=[a_1-a_2]_q\,v_{a-e^1}, \quad && F_1\cdot v_a = [a_1]_{x,q}\,v_{a+e^1} \\ \label{eq:Uhsl3-action-1-2} & K_2\cdot v_a = q^\frac{a_1-2a_2}{2}\,v_a, \quad && E_2\cdot v_a = [a_2]_q\,v_{a-e^2}, \quad && F_2\cdot v_a = [a_1-a_2]_q\,v_{a+e^2}
\end{alignat}
where $e^1,e^2$ denote the elements $|1,0\rangle$ and $|0,1\rangle$ respectively. The irreducible component of $V_{q^n}^1$ spanned by $v_0$ is isomorphic to $V^q_{(n,0)}$, where we send $v_a=|a_1,a_2\rangle$ to $z_1^{n-a_1}z_2^{a_1-a_2}z_3^{a_3}$ in the $V^q_{(n,0)}$ polynomial basis.

Analogously, one can write the Verma module $V_x^2$, which has the same underlying vector space as $V_x^1$, as the module generated by the basis
\begin{equation*}
    \{v^a:=|a_3,a_4\rangle \,|\, a_3\geq a_4\geq 0 \},
\end{equation*}
and actions given by 
\begin{alignat}{3}
    \label{eq:Uhsl3-action-2-1} & K_1\cdot v^a = q^\frac{a_1-2a_2}{2}\,v^a, \quad && E_1\cdot v^a = [a_2]_q\,v^{a-e^2}, \quad && F_1\cdot v^a = [a_1-a_2]_q\,v^{a+e^2} \\
    \label{eq:Uhsl3-action-2-2} & K_2\cdot v^a = x^\frac12\,q^\frac{-2a_1+a_2}{2}\,v^a, \quad && E_2\cdot v^a=[a_1-a_2]_q\,v^{a-e^1}, \quad && F_2\cdot v^a = [a_1]_{x,q}\,v^{a+e^1}
\end{alignat}
such that $v^0\in V^2_{q^m}$ spans $V^q_{(0,m)}$.

\subsection{Action on the tensor product}

In order to generalize Lemma \ref{lemma:RT-inv} to the infinite-dimensional case, we need to compute the $R$ matrix associated to $V_{x}^{1}\otimes V_{y}^{2}$. In particular, we need to find the action of the elements appearing in Equation \eqref{eq:Bur-formula} on the tensor product. It is convenient to use the alternative choice of generators
\begin{equation*}
    e_i=K_i^{\frac12} E_i, \hspace{2em} f_i=K_i^{-\frac12}F_i \hspace{3em} i=1,2,3.
\end{equation*}

Denote by $w_a:=|a_1,a_2,a_3,a_4\rangle$ an element of the basis of $V_x^1\otimes V_y^2$, where $|a_1,a_2\rangle$ and $|a_3,a_4\rangle$ are basis elements of $V_x^1$ and $V_y^2$ respectively. The actions of the generators $\{e_i,f_i\}_{i=1,2,3}$ are the following.
\begin{alignat}{3}
    \label{eq:e1-action} &e_1\cdot w_a = \left(x^\frac14 \,q^{\frac{a_2-2a_1+2}{4}+\frac{a_3-2a_4}{2}}\,[a_1-a_2]_q \right)\,w_{a-b^1} + \left(q^\frac{a_3-2a_4+2}{4}\,[a_4]_q\right) \, w_{a-b^4}, \\
    &f_1 \cdot w_a = \left(x^{-\frac14}q^{-\frac{a_2-2a_1-2}{4}}[a_1]_{x,q}\right) \, w_{a+b^1} + \left( x^{-\frac12}q^{\frac{-a_2+2a_1}{2}-\frac{a_3-2a_4-2}{4}}[a_3-a_4]_q\right) \,w_{a+b^4}, \\
    &e_2 \cdot w_a = \left(y^\frac12\,q^{\frac{a_1-2a_2+2}{4}}\,q^{\frac{a_4-2a_3}{2}}\,[a_2]_q\right) \,w_{a-b^2}+\left(y^\frac14\,q^{\frac{a_4-2a_3+2}{2}}\,[a_3-a_4]_q\right) \,w_{a-b^3}, \\
    &f_2 \cdot w_a = \left(q^{-\frac{a_1-2a_2-2}{4}}\,[a_1-a_2]_q\right)w_{a+b^2}+\left(y^{-\frac14}\,q^{-\frac{a_2-2a_2}{2}}\,q^{-\frac{a_4-2a_3-2}{4}}\,[a_3]_{y,q}\right)w_{a+b^3}, \\
    &e_3 \cdot w_a = \left(x^{\frac14}\,y^\frac12\,q^{\frac{a_1-3a_2+3}{4}}\,q^{-\frac{a_3+a_4}{2}}\,[a_2]_q\right)w_{a-b^1-b^2}+\left(-y^\frac14\,q^{\frac{-3a_3+a_4+1}{4}}\,[a_4]_q\right)w_{a-b^3-b^4} \\
    &\hspace{4em}+\left(y^\frac12\,(q-q^{-1})\,q^{\frac{a_1-2a_2+2}{4}}\,q^{-\frac{3a_3}{4}}\,[a_2]_q\,[a_4]_q\right)w_{a-b^2-b^4}, \nonumber \\
    \label{eq:f3-action} &f_3\cdot w_a=\left(-x^{-\frac14}\,q^{\frac{3a_2-a_1+1}{4}}\,[a_1]_{x,q}\right)w_{a+b^1+b^2} +\left(x^{-\frac12}\,y^{-\frac14}\,q^{\frac{a_1+a_2}{2}}\,q^{\frac{-a_4+3a_1+3}{4}}\,[a_3]_{y,q}\right)w_{a+b^3+b^4} \\
    &\hspace{4em}+\left(x^{-\frac14}\,y^{-\frac14}\,(q-q^{-1})\,q^{\frac{3a_2}{4}}\,q^{-\frac{a_4-2a_3-2}{4}}\,[a_1]_{x,q}\,[a_3]_{y,q}\right)w_{a+b^1+b^3},\nonumber
\end{alignat}
where $b^1, b^2, b^3$ and $b^4$ are the elements of $\mathbb{N}^4$ with a 1 on the $i^{\text{th}}$ component and 0 on the rest.

We observe that the action of the generators on a basis element lies in a two- or three-dimensional subspace of the tensor product of Verma modules. To compute the action of the $R$-matrix in \eqref{eq:Bur-formula}, we must understand how arbitrary powers of the generators act on this representation. This computation is facilitated by the $q$-Multinomial theorem.

\begin{lema}[$q$-Multinomial theorem, \cite{Foata-Han}]\label{lemma:operators-q-commute}
Let $A, B$ and $C$ be operators satisfying the following $q$-commutation relations
\begin{equation*}
    CA=q AC, \hspace{2em } CB = q BC, \hspace{2em} BA = q AB.
\end{equation*}
Then,
\begin{equation}
    (A+B)^n= \sum_{0\leq k \leq n} \qbinom{n}{k} A^k B^{n-k}
\end{equation}
and
\begin{equation}
    (A+B+C)^n = \sum_{\substack{0\leq k_1,k_2,k_3 \leq n_3 \\ k_1+k_2+k_3=n_3}} \qbinom{n_3}{k_1\;k_2\;k_3} A^{k_1} B^{k_2} C^{k_3} 
\end{equation}
\end{lema}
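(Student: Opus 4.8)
The plan is to prove both identities by reducing everything to a single $q$-Pascal recurrence and then inducting on the exponent. First I would record the key consequence of the relation $BA=qAB$, namely that $B$ passes through a power of $A$ up to a scalar: $BA^k=q^k A^k B$, which follows by a one-line induction on $k$. I would also verify the recurrence
\begin{equation*}
\qbinom{n}{k}=\qbinom{n-1}{k-1}+q^k\qbinom{n-1}{k},
\end{equation*}
which comes directly from the definition $\qbinom{n}{k}=(q)_n/\left((q)_k(q)_{n-k}\right)$ after factoring out $(q)_{n-1}/\left((q)_k(q)_{n-k}\right)$ and using the identity $(1-q^k)+q^k(1-q^{n-k})=1-q^n$.

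For the binomial case I would induct on $n$, the base case $n=0$ being trivial. Assuming $(A+B)^{n-1}=\sum_k\qbinom{n-1}{k}A^kB^{n-1-k}$, I multiply on the left by $A+B$. The $A$ term raises the power of $A$ by one, while in the $B$ term I move $B$ past $A^k$ using $BA^k=q^kA^kB$, which contributes a factor $q^k$. Reindexing the first sum by $j=k+1$ and collecting the coefficient of $A^kB^{n-k}$ yields exactly $\qbinom{n-1}{k-1}+q^k\qbinom{n-1}{k}$, which by the recurrence equals $\qbinom{n}{k}$; checking the boundary cases $k=0$ and $k=n$ separately (where one of the two $q$-binomials vanishes) closes the induction.

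For the trinomial case I would avoid a direct three-index induction and instead group $D:=B+C$. A short computation gives $DA=(B+C)A=qA(B+C)=qAD$, so the pair $(A,D)$ satisfies the same hypothesis as the binomial case, producing $(A+B+C)^n=\sum_j\qbinom{n}{j}A^jD^{n-j}$. Since $CB=qBC$, the binomial identity applied to the pair $(B,C)$ expands $D^{n-j}=(B+C)^{n-j}=\sum_i\qbinom{n-j}{i}B^iC^{n-j-i}$. Substituting and writing $(k_1,k_2,k_3)=(j,i,n-j-i)$ gives the coefficient $\qbinom{n}{k_1}\qbinom{n-k_1}{k_2}$, which collapses to $(q)_n/\left((q)_{k_1}(q)_{k_2}(q)_{k_3}\right)=\qbinom{n}{k_1\;k_2\;k_3}$ by the product definition of the $q$-multinomial.

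The computations are all routine and I do not expect a genuine obstacle; the only points requiring care are purely bookkeeping. First, the scalar $q^k$ arising from commuting $B$ through $A^k$ must be tracked accurately, since an error there propagates through the recurrence. Second, one must check that $D=B+C$ inherits exactly the relation $DA=qAD$ \emph{with the same} $q$, so that the binomial step may legitimately be reapplied to $(A,D)$; this is what makes the grouping argument work and lets the two $q$-binomial factors telescope into the $q$-multinomial. (I read the exponent $n_3$ in the trinomial statement as the common value $n=k_1+k_2+k_3$.)
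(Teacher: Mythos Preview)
Your argument is correct: the $q$-Pascal recurrence you state matches the paper's convention $\qbinom{n}{k}=(q)_n/\bigl((q)_k(q)_{n-k}\bigr)$, the left-multiplication induction goes through exactly as you describe, and the grouping $D=B+C$ legitimately inherits $DA=qAD$, so the trinomial identity follows from two applications of the binomial case together with the telescoping $\qbinom{n}{k_1}\qbinom{n-k_1}{k_2}=\qbinom{n}{k_1\;k_2\;k_3}$.

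There is nothing to compare against, however: the paper does not supply its own proof of this lemma. It is stated with a citation to Foata--Han and used as a black box to expand powers of the generators $e_i,f_i$ acting on $V_x^1\otimes V_y^2$. Your write-up therefore provides strictly more than the paper does on this point; if anything, it could serve as a self-contained replacement for the external reference. Your reading of the exponent $n_3$ in the trinomial statement as the common value $n=k_1+k_2+k_3$ is also the intended one.
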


Split the action of the generators into a sum of two or three different operators, such that the image of each operator is spanned by a unique basis element. For $\{e_i,f_i\}_{i=1,2}$ there will be two of these operators, whereas for $\{e_3,f_3\}$ there will be three; and these operators will either $q$-commute or $q^{-1}$-commute with each other. Therefore, we can apply Lemma \ref{lemma:operators-q-commute} to write an expression for an arbitrary power of the generators as a sum of basis elements with coefficients $q$--hypergeometric functions.

\subsection{\texorpdfstring{The large color $\sl_3$ $R$-matrix}{The large color sl3 R-matrix}}

Let $|\textbf{a},\textbf{b}\rangle$ denote a basis element of $\left(V_x^1\otimes V_y^2\right)\otimes\left(V_x^1\otimes V_y^2\right)$, with $|\textbf{a}\rangle$ and $|\textbf{b}\rangle$ of the form $|\textbf{a}\rangle=|a_1,a_2,a_3,a_4\rangle$.

Let $\textbf{r}$ be the set of internal labels,
\begin{equation*}
    \textbf{r}=\left\{n_1,n_2,n_3,k_{e_{1}},k_{e_{2}},k_{e_{31}},k_{e_{32}},k_{f_{1}},k_{f_{2}},k_{f_{31}},k_{f_{32}}\right\}.
\end{equation*}
Then our $R$-matrix expressions can be simplified by defining the three vectors
\begin{equation*}
    \textbf{n} = (n_1,n_2), \qquad \textbf{k}_e = (k_{e_{1}},k_{e_{2}}), \qquad \textbf{k}_f =(k_{f_{1}},k_{f_{2}})
\end{equation*}
and the overline operation (for $i = 1, 2)$:
\begin{equation*}
    \overline{n_i} := n_i+n_3, \qquad \overline{k_e}_{{}_i} := k_{e_i}+k_{e_{3i}}, \qquad \overline{k_f}_{{}_i} := k_{f_i}+k_{f_{3i}}.
\end{equation*}
We extend this in the obvious way for the vectors above as well as linear combinations. Finally, define the swap operation by $\textbf{m}^t=(m_1,m_2)^t=(m_2,m_1)$.

The R-matrix of $\sl_3$ associated to the representation $V_x^{1}\otimes V_y^{2}$ is given by
\begin{equation}\label{eq:R-matrix-general}
    {}_{\sl_3}R(|\textbf{a},\textbf{b}\rangle) = \sum_{\mathcal{S}(\textbf{r})} R\left(|\textbf{a},\textbf{b}\rangle,\textbf{r}\right)|\textbf{b'},\textbf{a'}\rangle,
\end{equation}
where the summand element $R\left(|\textbf{a},\textbf{b}\rangle,\textbf{r}\right)$ is a polynomial in $x^{-1}$, $y^{-1}$, $q$ and $q^{-1}$
\begin{equation}\label{eq:R-matrix}
    \begin{split}
    R\left(|\textbf{a},\textbf{b}\rangle,\textbf{r}\right) = 
    &(-1)^{k_{e_{32}}-k_{f_{32}}} \,q^{f\left(|\textbf{a},\textbf{b}\rangle,\textbf{r}\,\right) + \frac{1}{3} \left(\log_q (x) \log_q (y)+\log_q ^2(x)+\log_q ^2(y)\right)} 
    \\& x^{-\frac{1}{2}(a_1+b_1+a_4+b_4-\frac{1}{2}(\overline{k_e}_{{}_1}+\overline{k_f}_{1})+\overline{(n-k_f)}_{1})} \, y^{-\frac{1}{2}(a_2+b_2+a_3+b_3-\frac{1}{2}(\overline{(n-k_f)}_{{}_2}+\overline{(n+k_e)}_{{}_2}))}
    \\& \left(q^{-a_2};q\right){}_{\overline{k_e}_{{}_2}} \left(q^{- a_4};q\right){}_{\overline{\left(n-k_{e}\right)}_{{}_1}} \left(q^{ b_1}\,x^{-1};q\right){}_{\overline{k_f}_{{}_1}} \left(q^{b_3}\,y^{-1};q\right){}_{\overline{(n-k_{f})}_{{}_2}}  
    \\& \left(q^{(a_2-\overline{k_e}_{{}_2})-(a_1-k_{e_{31}})};q\right){}_{k_{e_1}} \left(q^{a_4-a_3};q\right){}_{(n-k_e)_{{}_2}} \left(q^{b_2-b_1};q\right){}_{k_{f_2}} 
    \\& \left(q^{(b_4+n_3-k_{f_{31}})-(b_3+\overline{(n-k_{f})}_{{}_2})};q\right){}_{(n-k_f)_{{}_1}}
    \prod_{i=1,2} \frac{\left(q^{n_i};q^{-1}\right){}_{k_{e_i}}\left(q^{n_i};q^{-1}\right){}_{k_{f_i}}}{\left(q^{-1}\right){}_{n_i}\left(q\right){}_{k_{e_i}}\left(q\right){}_{k_{f_i}}} 
    \\&\frac{\left(q^{n_3};q^{-1}\right){}_{k_{e_{32}}}\left(q^{-k_{e_{32}}};q\right){}_{k_{e_{31}}}\left(q^{n_3};q^{-1}\right){}_{k_{f_{31}}}\left(q^{-k_{f_{31}}};q\right){}_{k_{f_{32}}}}{\left(q^{-1}\right){}_{n_3}\left(q^{-1}\right){}_{k_{e_{31}}}\left(q^{-1}\right){}_{k_{f_{32}}}\left(q\right){}_{k_{e_{32}}}\left(q\right){}_{k_{f_{31}}}}
    \end{split}
\end{equation}
with
\begin{align*}
    f\left(|\textbf{a},\textbf{b}\rangle,\textbf{r}\right) = &\textbf{a}\cdot M\cdot \textbf{b} + (-\underline{\textbf{n}}\cdot \underline{\textbf{n}}+2|\underline{\textbf{n}}|+n_1(n_2-n_3)) \\
    &+ \frac{1}{4}\Bigg[(3n_3-n_2)\okef + (-3n_3+n_1)\okes - 4\ketf \okef + 2\kets(1+\kets)+2\okef\okes \\
    &+ (-n_2-n_3)\okff + (-3n_1+n_3)\okfs - 2\kfts(1+\kfts)+4\kftf(n-k_f)_{1}\\
    &+2\okff\okfs +(-\overline{(2n-k_e)}_{2}+4\onf)\,a_1 +(-\overline{(2n+k_e)}_{1}+4\ons)\,a_2\\
    &+(-\overline{(n-k_e)}_{1} +4(n-k_e)_2)\,a_3 +(4\,\overline{(n-k_e)}_{1}+\overline{(n-k_e)}_{2}-4(n-k_e)_2)\,a_4 \\
    &+ (-4\okff +4 \kfs -\okfs) b_1 +(\okff -4\kfs)b_2+(\overline{(3n-k_f)}_{1}-4\ons)b_3\\
    &+(-\overline{(3n-k_f)}_{2}-4(n_1-n_2))b_4\Bigg],
\end{align*}
and
\begin{equation*}
    M = \begin{pmatrix}
        1 & -\frac12 & -\frac12 & 1 \\
        -\frac12 & 1 & 1 & -\frac12 \\
        -\frac12 & 1 & 1 & -\frac12 \\
        1 & -\frac12 & -\frac12 & 1 \\
    \end{pmatrix}
    , \hspace{3em} \underline{\textbf{n}} = (n_1,n_2,n_3).
\end{equation*}

$\mathcal{S}(\textbf{r})$ is the set of constraints over which the sum is performed
\begin{equation}\label{eq:Sr}
\begin{split}
        \mathcal{S}(\textbf{r}) = &\{n_1\geq k_{e_1},k_{f_1}\geq 0\}\,\cup\,\{n_2\geq k_{e_2},k_{f_2}\geq0\}\,\cup\,\{n_3\geq k_{e_{32}}\geq k_{e_{31}}\geq 0\} \\ &\cup\{n_3\geq k_{f_{31}}\geq k_{f_{32}}\geq 0\}
    \end{split}
\end{equation}

The output state $|\textbf{b'},\textbf{a'}\rangle$ has an implicit dependence to both the input state $|\textbf{a},\textbf{b}\rangle$ and $\textbf{r}$ given by
\begin{alignat}{4}\label{eq:output-input-1}
    &|\textbf{a'}\rangle &&= |\textbf{a}\rangle - |\textbf{k}_e\textbf{'}\rangle, \hspace{1em} \text{where} \hspace{1em} &&|\textbf{k}_e\textbf{'}\rangle &&= |\overline{\textbf{k}_e},\overline{(\textbf{n}-\textbf{k}_e)^t}\rangle \\ \label{eq:output-input-2}
    &|\textbf{b'}\rangle &&= |\textbf{b}\rangle + |\textbf{k}_f\textbf{'}\rangle, \hspace{2em} &&|\textbf{k}_f\textbf{'}\rangle &&= |\overline{\textbf{k}_f},\overline{(\textbf{n}-\textbf{k}_f)^t}\rangle
\end{alignat}
where $|\textbf{a'}\rangle$ is the state assigned to the overstrand and $|\textbf{b'}\rangle$ to the understrand. It follows immediately that $a_i-a_i'\geq 0$, $b_i'-b_i\geq 0$ for all $i=1,\dots,4$ and the internal labels fix two conservation relations between the incoming and outgoing states
\begin{equation}\label{eq:conservation-rels}
    \sum_{i=1,4} (b_i'-b_i)-(a_i-a_i')=0 \qquad \text{and} \qquad \sum_{i=2,3} (b_i'-b_i)-(a_i-a_i')=0.
\end{equation}

\subsection{The symmetric limit}\label{sec:symmetric-limit}

The symmetric invariant $F_K^{\sl_3,sym}$ associated to symmetric representations \cite{EGGKPS, gruen22, Park20} can be recovered as a specialization of $F_K^{\sl_3}$.

\begin{proposicion} Under the assumptions of Theorem \ref{thm:FK-SU(3)},
\begin{equation}\label{eq:sym-limit}
         F_K^{\sl_3}(x,y=1,q)=F_K^{\sl_3,sym}(x,q).
\end{equation}
\end{proposicion}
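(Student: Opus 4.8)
The plan is to prove the specialization \eqref{eq:sym-limit} by tracking what happens to the entire construction when we set $y=1$, and showing that the $R$-matrix for $V_x^1\otimes V_y^2$ degenerates to (a form equivalent to) the symmetric $R$-matrix of \cite{gruen22} whose reduced quantum trace computes $F_K^{\sl_3,sym}$. First I would recall that $F_K^{\sl_3,sym}$ is, by construction in \cite{gruen22}, the reduced quantum trace over the single symmetric Verma module $V_x^1$ (the ``large color limit'' of the $V^q_{(n,0)}$ representations), while $F_K^{\sl_3}$ is the reduced quantum trace over $V_x^1\otimes V_y^2$. Thus the heart of the matter is to understand how the second tensor factor $V_y^2$ collapses at $y=1$.

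The key observation to establish is that setting $y=1$ in the Verma module $V_y^2$ makes the highest-weight-type generator degenerate: inspecting the actions \eqref{eq:Uhsl3-action-2-1}–\eqref{eq:Uhsl3-action-2-2}, the factor $[a_1]_{x,q}$ becomes $[a_1]_{y,q}\big|_{y=1}$ in the relevant places, and the quantum bracket $[k]_{x,q}=\frac{x^{1/2}q^{-k/2}-x^{-1/2}q^{k/2}}{q^{1/2}-q^{-1/2}}$ at $x=1$ reduces to $[-k]_q=-[k]_q$, so the $F_2$ action along the $V_y^2$ direction becomes proportional to an ordinary finite-type bracket. Concretely I would argue that at $y=1$ the labels $a_3,a_4$ attached to $V_y^2$ no longer carry an independent infinite tower but are forced (through the conservation relations \eqref{eq:conservation-rels} and the weight at which we evaluate) to collapse onto the trivial sector, effectively setting $a_3=a_4=0$ and $b_3=b_4=0$ throughout. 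This is where Lemma \ref{lemma:LR-rule-Uqsl3} enters: at $y=1$ the factor $V_y^2$ specializes to the trivial representation $V^q_{(0,0)}$, and $V^q_{(n,0)}\otimes V^q_{(0,0)}\cong V^q_{(n,0)}$, so the target color reduces from $(n,m)$ to the purely symmetric $(n,0)$.

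Next I would carry out the specialization directly on the explicit $R$-matrix \eqref{eq:R-matrix}. With $a_3=a_4=b_3=b_4=0$ (and $y=1$, so $\log_q(y)=0$), I would check term by term that: the Pochhammer factors indexed by the $V_y^2$ data, namely $(q^{-a_4};q)_{\overline{(n-k_e)}_1}$, $(q^{b_3}y^{-1};q)_{\overline{(n-k_f)}_2}$, $(q^{a_4-a_3};q)_{(n-k_e)_2}$ and the $\alpha_3$-root contributions, either collapse to $1$ or force the associated internal labels $n_2,n_3,k_{e_2},k_{f_2},k_{e_{3i}},k_{f_{3i}}$ to vanish; and that the surviving sum over $n_1,k_{e_1},k_{f_1}$ together with the reduced factor $f$ reproduces exactly the $\sl_3$ symmetric $R$-matrix of \cite{gruen22}. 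The $y$-power prefactor should simplify to $1$ once the $V_y^2$ labels vanish, and the $q$-exponent $f$ should restrict to its symmetric counterpart. Finally, since the reduced quantum trace is functorial in the colored braid and the whole diagram's color is determined by the open strand, specializing the open-strand color to the symmetric sector propagates the specialization to every crossing, giving $\widetilde{\text{Tr}}^q_{V_x^1\otimes V_1^2}(\beta_K)=\widetilde{\text{Tr}}^q_{V_x^1}(\beta_K)$ and hence \eqref{eq:sym-limit}.

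The main obstacle I expect is the bookkeeping in the second step: verifying that $y=1$ genuinely forces the $V_y^2$-indexed internal labels to collapse, rather than merely simplifying individual factors. Several Pochhammer symbols in \eqref{eq:R-matrix} have arguments that are not obviously singular or trivial at $y=1$ (for instance those depending on $a_2,b_2$ and the mixed $\alpha_3$ labels), so I would need to show carefully that the combination of the constraint set $\mathcal{S}(\textbf{r})$, the evaluation at the symmetric highest weight, and the conservation relations \eqref{eq:conservation-rels} leaves no stray contributions. A cleaner and more robust alternative, which I would present as the primary argument, is representation-theoretic rather than computational: at $y=1$ the module $V_y^2$ admits $v^0$ as a genuine highest weight vector generating the trivial summand, so the braiding restricts to the $V_x^1$ factor by naturality of the $R$-matrix, and the explicit formula is then invoked only as a confirmation.
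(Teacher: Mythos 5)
Your primary argument---that at $y=1$ the vector $v^0$ generates a trivial subrepresentation of $V_y^2$, so by naturality of the braiding the $R$-matrix of $V_x^1\otimes V_y^2$ restricts to that of $V_x^1$ (which constructs $F_K^{\sl_3,sym}$ in \cite{gruen22}), and the fact that the closure is a knot propagates the open-strand color to the whole diagram---is exactly the paper's proof. The term-by-term specialization of \eqref{eq:R-matrix} that you sketch first (in particular the claim that the conservation relations force $a_3=a_4=b_3=b_4=0$ throughout, which is doubtful as a termwise statement since cancellations rather than vanishing of individual summands can occur) is neither needed nor carried out in the paper, and you were right to subordinate it to the representation-theoretic argument.
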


\begin{proof} When $y=1$, the highest weight vector of the Verma module $V^y_2$ generates the trivial representation $\C$. Since $F_K^{\sl_3}$ is defined using the $R$-matrix of $V_x^1\otimes V_y^2$, the substitution $y=1$ sends ${}_{\sl_3}R$ to the $R$-matrix of the Verma module $V_x^1$, which constructs $F_K^{\sl_3,sym}$ in \cite{gruen22}. Given that the open strand is colored by the element generating $V_x^1\otimes \C \cong V_x^1$ and the closure of the braid is a knot, the result in \eqref{eq:sym-limit} immediately follows.
\end{proof}

As expected, the computation of the general $\sl_3$ invariant captures more information and is more involved than its symmetric counterpart. This becomes apparent when examining the $R$-matrix expression, as the number of internal labels for each crossing grows from three to eleven. Braiding a strand colored with a tensor representation is equivalent to braiding two parallel strands, each colored by a symmetric representation, which justifies the growth of the number of labels that play a role in our construction. Explicitly, if we denote by $R_{z,w}^{i,j}$ the $R$-matrix acting on $V_z^i\otimes V_w^j$, the equality
\begin{equation}
    {}_{\sl_3}R = (\id_{V_x^1}\otimes R_{x,y}^{1,2} \otimes \id_{V_y^2})\circ(R_{x,x}^{1,1} \otimes R_{y,y}^{2,2})\circ(\id_{V_x^1}\otimes R_{y,x}^{2,1} \otimes \id_{V_y^2})
\end{equation}
holds when acting on $\left(V_{x}^1\otimes V_y^2\right)\otimes\left(V_{x}^1\otimes V_y^2\right)$. See Figure \ref{fig:crossing-tensor}.

\begin{figure}
    \centering
    \includegraphics[width=0.5\linewidth]{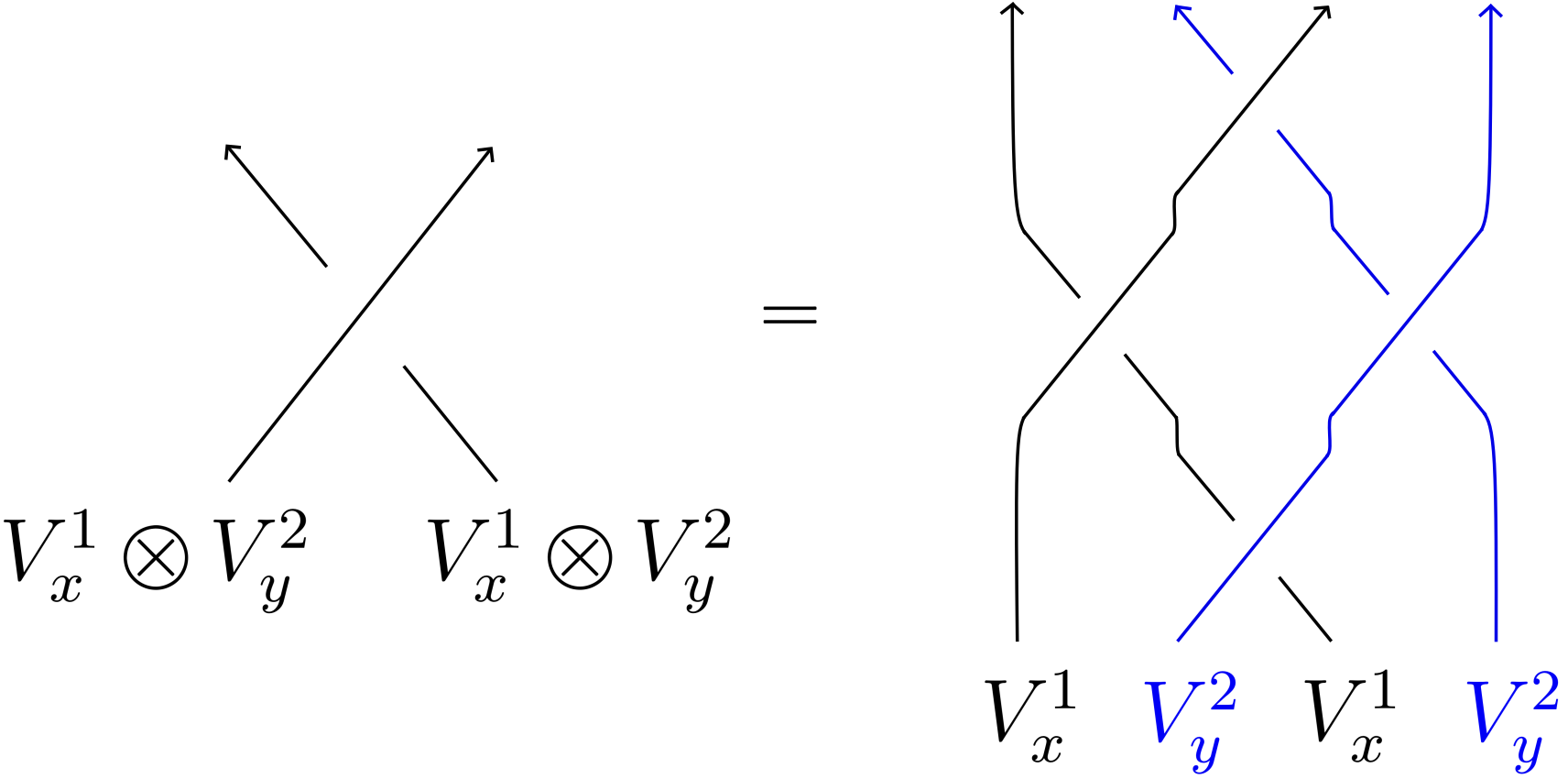}
    \caption{A strand colored with $V_x^1\otimes V_y^2$ can be replaced by two parallel strands, the first colored by $V_x^1$ and the second by $V_y^2$.}
    \label{fig:crossing-tensor}
\end{figure}

\subsection{Proof of Theorem \ref{thm:FK-SU(3)}}

Let us first assume that the reduced quantum trace converges to a power series in $x^{-1}$ and $y^{-1}$, with coefficients Laurent polynomials in $q$. In the specialization $x=q^n$ and $y=q^m$, the basis element $|\textbf{0}\rangle$ spans the finite representation $V_{(n,0)}\otimes V_{(0,m)}$ and recovers the associated $R$-matrix. By Lemma \ref{lemma:RT-inv}, the reduced quantum trace gives the $\sl_3$ Reshetikhin--Turaev invariant associated to $V_{(n,m)}$ of $K$, which proves \eqref{eq:thm-MMR} and \eqref{eq:thm-alexander}.

Now let us show that there is indeed convergence in the quantum trace.

\begin{lema}\label{lemma:proof-1} Denote by $|\textbf{a},\textbf{b}\rangle$ and $|\textbf{b'},\textbf{a'}\,\rangle$ the incoming and outgoing states respectively. Then, the $R$-matrix expression \eqref{eq:R-matrix} is an element of $$x^{-\frac{1}{2}(a_1'+a_4'+b_1+b_4)}y^{-\frac{1}{2}\left(\frac{3}{2}a'_2+a_3'+\frac{1}{2}b_2+b_3+\frac{1}{2}\left(b_2'-a_2\right)\right)}\mathbb{Z}[q,q^{-1},x^{-1},y^{-1}].$$
\end{lema}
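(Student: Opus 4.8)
The plan is to treat the right-hand side of \eqref{eq:R-matrix} as an explicit finite product of elementary factors and to show, factor by factor, that the total contribution lies in the claimed module. I would first partition the factors into three groups: (i) the sign $(-1)^{k_{e_{32}}-k_{f_{32}}}$ together with every $q$-Pochhammer symbol whose argument is a pure power of $q$ (those on the third, fifth and sixth displayed lines, plus the entire product over $i=1,2$ and the $n_3$-block on the last two lines); (ii) the two ``mixed'' symbols $\left(q^{b_1}x^{-1};q\right)_{\overline{k_f}_{1}}$ and $\left(q^{b_3}y^{-1};q\right)_{\overline{(n-k_f)}_{2}}$; and (iii) the prefactor $q^{f+\frac13(\log_q(x)\log_q(y)+\log_q^2(x)+\log_q^2(y))}$ together with the two explicit monomials in $x$ and $y$. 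The claim then reduces to controlling the $x$- and $y$-degrees and the $q$-integrality contributed by each group.

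Group (i) is immediate: expanding each symbol through $(a;q)_n=\prod_{i=0}^{n-1}(1-aq^i)$, every factor is a product of terms $1-q^{(\cdots)}$, and one checks that each exponent appearing is an integer combination of the labels $\textbf{a},\textbf{b},\textbf{r}$, so this group lies in $\mathbb{Z}[q,q^{-1}]$ and carries no powers of $x$ or $y$. For group (ii) the same expansion shows $\left(q^{b_1}x^{-1};q\right)_{\overline{k_f}_{1}}$ is a polynomial in $x^{-1}$ of degree $\overline{k_f}_{1}$ with coefficients in $\mathbb{Z}[q,q^{-1}]$ and constant term $1$, and likewise $\left(q^{b_3}y^{-1};q\right)_{\overline{(n-k_f)}_{2}}$ is a polynomial in $y^{-1}$ with constant term $1$; hence group (ii) lies in $\mathbb{Z}[q,q^{-1},x^{-1},y^{-1}]$ and, crucially, its maximal $x$- and $y$-degrees are both $0$. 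Finally, the $\log_q$-dependent part of the prefactor in group (iii) depends only on $x$ and $y$ and not on the states or the internal labels, so it is a global unit common to every entry of the $R$-matrix, which I would carry along as a fixed normalization (it encodes the overall twist contribution).

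The substance of the lemma is therefore the bookkeeping in group (iii). Here I would rewrite the input labels in terms of the output labels using \eqref{eq:output-input-1} and \eqref{eq:output-input-2} (for instance $a_1'=a_1-\overline{k_e}_{1}$, $a_4'=a_4-(n_1-k_{e_1}+n_3)$, and dually $b_1'=b_1+\overline{k_f}_{1}$), and then compare the exponent of the explicit $x$-monomial in \eqref{eq:R-matrix} with the target $-\tfrac12(a_1'+a_4'+b_1+b_4)$. Since group (ii) can only lower the $x$-degree, membership in $x^{-\frac12(a_1'+a_4'+b_1+b_4)}\mathbb{Z}[q,q^{-1},x^{-1}]$ amounts to the inequality that the explicit $x$-exponent is at most $-\tfrac12(a_1'+a_4'+b_1+b_4)$; after substituting the output–input relations this becomes a linear inequality in $\textbf{r}$, which I would verify from the constraints \eqref{eq:Sr} ($0\le k_{e_1},k_{f_1}\le n_1$, $0\le k_{e_{31}}\le k_{e_{32}}\le n_3$, $0\le k_{f_{31}}\le n_3$), together with a parity check confirming the two exponents lie in the same half-integer class. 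The identical scheme handles the $y$-exponent against $-\tfrac12(\tfrac32a_2'+a_3'+\tfrac12 b_2+b_3+\tfrac12(b_2'-a_2))$; this bound is the fiddliest, because of the asymmetric coefficients (the $\tfrac32$ and the $b_2'-a_2$ term) reflecting the inequivalence of $V_x^1$ and $V_y^2$. The remaining task is to check that $q^{f}$ is an honest integer power of $q$, i.e. that the half-integer entries of $M$ in $\textbf{a}\cdot M\cdot\textbf{b}$ combine with the quarter-integer bracket defining $f$ to an integer, which is a congruence verification on the coefficients.

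I expect the exponent bookkeeping of group (iii) — in particular the $y$-bound with its half-integer/parity matching — to be the main obstacle, as it is the only place where the constraints \eqref{eq:Sr} are genuinely needed and where the asymmetry of the two Verma modules must be tracked carefully. Once the three groups are assembled, their product lies in $x^{-\frac12(a_1'+a_4'+b_1+b_4)}y^{-\frac12(\frac32 a_2'+a_3'+\frac12 b_2+b_3+\frac12(b_2'-a_2))}\mathbb{Z}[q,q^{-1},x^{-1},y^{-1}]$ as claimed, which is precisely the per-crossing degree bound required to sum over the internal labels and conclude convergence of the reduced quantum trace in $\mathbb{Z}[q,q^{-1}][[x^{-1},y^{-1}]]$.
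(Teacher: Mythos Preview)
Your outline is correct and follows the same route as the paper's proof: isolate the explicit $x$- and $y$-monomials together with the two mixed Pochhammer symbols, read off the highest and lowest exponents from the expansion of those symbols, and then convert the resulting bounds into the target form via the output--input relations \eqref{eq:output-input-1}--\eqref{eq:output-input-2}. The paper's argument is terser (it only writes out the exponent bounds and invokes the conservation relations), while you add a layer of care about the $q$-integrality and the global $\log_q$-prefactor; that extra care is appropriate, since the paper silently drops the latter and does not address the former.

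One small point to tighten: your description of group~(i) as ``a product of terms $1-q^{(\cdots)}$'' is not literally true, because the last two displayed lines of \eqref{eq:R-matrix} involve \emph{ratios} of Pochhammer symbols. What makes this harmless is that each such ratio is (up to a power of $q$) a $q$-binomial or $q$-multinomial coefficient, hence a genuine element of $\mathbb{Z}[q,q^{-1}]$; you should say this explicitly rather than treating the denominators as if they were absent. With that adjustment, your three-group decomposition and the exponent bookkeeping you describe reproduce exactly the computation the paper sketches.
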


\begin{proof}

The $x$ contribution in the R-matrix expression is
\begin{equation*}
    x^{-\frac{1}{2}(a_1+a_4+b_1+b_4-\frac{1}{2}(\overline{k_e}_{{}_1}+\overline{k_f}_{1})+\overline{(n-k_f)}_{1})} \, \left(\frac{q^{ b_1}}{x};q\right){}_{\overline{k_f}_{{}_1}}.
\end{equation*}
Since $\overline{k_f}_{{}_1} \geq 0$, we obtain the following bounds for the $x$-exponent
\begin{alignat*}{2}
    &\text{Highest:} \hspace{3em} &&x^{-\frac{1}{2}(a_1+a_4+b_1+b_4-\frac{1}{2}(\overline{k_e}_{{}_1}+\overline{k_f}_{1})+\overline{(n-k_f)}_{1})} \\
    &\text{Lowest:} \hspace{3em} &&x^{-\frac{1}{2}(a_1+a_4+b_1+b_4-\frac{1}{2}(\overline{k_e}_{{}_1}+\overline{k_f}_{1})+\overline{(n+k_f)}_{1})}
\end{alignat*}
Similarly, the contribution in $y$
\begin{equation*}
    y^{-\frac{1}{2}(a_2+a_3+b_2+b_3-\frac{1}{2}(\overline{(n-k_f)}_{{}_2}+\overline{(n+k_e)}_{{}_2}))} \, \left(\frac{q^{b_3}}{y};q\right){}_{\overline{(n-k_{f})}_{{}_2}}
\end{equation*}
is bounded as a polynomial in $y$ by the following exponents
\begin{alignat*}{2}
    &\text{Highest:} \hspace{3em} &&y^{-\frac{1}{2}(a_2+a_3+b_2+b_3-\frac12({\overline{(n-k_f)}_{{}_2}+\overline{(n+k_e)}_{{}_2}))}} \\
    &\text{Lowest:} \hspace{3em} &&y^{-\frac{1}{2}(a_2+a_3+b_2+b_3-\frac12(-3{\overline{(n-k_f)}_{{}_2}+\overline{(n+k_e)}_{{}_2}))}}
\end{alignat*}

The desired result follows after applying the conservation relations in Eq. \eqref{eq:conservation-rels}.
\end{proof}

Let $s$ be the number of strands of $\beta_K$. Let $\textbf{i}_k=|i_{k,1},i_{k,2},i_{k,3},i_{k,4}\rangle$ be a basis element of $V_x^{1}\otimes V_y^{2}$, labelled by $k=1,\dots,s$. Define the tensor element $\langle \textbf{0},\textbf{i}_2,\dots,\textbf{i}_s|\beta_K|\textbf{0},\textbf{i}_2,\dots,\textbf{i}_s\rangle$ by reading each crossing in the braid $\beta_K$ as an $R$-matrix, evaluating the corresponding map at $|\textbf{0},\textbf{i}_2,\dots,\textbf{i}_s\rangle$, where $s$ is the number of strands of $\beta_K$, and extracting the coefficient of the state $|\textbf{0},\textbf{i}_2,\dots,\textbf{i}_s\rangle$. Here, $|\textbf{0},\textbf{i}_2,\dots,\textbf{i}_s\rangle$ denotes the tensor product of the states $\textbf{0},\dots,\textbf{i}_s$.

\begin{lema}\label{lemma:proof-2} The tensor element $\langle \textbf{0},\textbf{i}_2,\dots,\textbf{i}_s|\beta_K|\textbf{0},\textbf{i}_2,\dots,\textbf{i}_s\rangle$ is in $\mathbb{Z}[q,q^{-1},x^{-1},y^{-1}]$.
\end{lema}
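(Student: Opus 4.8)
The plan is to expand the tensor element as an explicit finite state sum and then control its three features separately: that the sum is finite, that its $q$-coefficients are integral Laurent polynomials, and that the accompanying powers of $x$ and $y$ are non-positive integers. First I would write $\langle \textbf{0},\textbf{i}_2,\dots,\textbf{i}_s|\beta_K|\textbf{0},\textbf{i}_2,\dots,\textbf{i}_s\rangle$ by inserting a complete set of basis states between consecutive crossings of $\beta_K$, so that it becomes a sum over all internal configurations of a product of one $R$-matrix entry \eqref{eq:R-matrix} per crossing, with the top and bottom external states both fixed to $(\textbf{0},\textbf{i}_2,\dots,\textbf{i}_s)$. The first point to settle is that only finitely many configurations contribute. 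Here I would use the conservation relations \eqref{eq:conservation-rels}: read off a single crossing, the quantities $Q_1(\textbf{s}):=s_1+s_4$ and $Q_2(\textbf{s}):=s_2+s_3$ are preserved when summed over the two strands meeting there. Since a crossing leaves the remaining strands untouched, the totals $\sum_k Q_1$ and $\sum_k Q_2$ over all $s$ strands are constant along the braid, equal to their values $\sum_k Q_j(\textbf{i}_k)$ at the bottom. As each $Q_j\geq 0$, every intermediate strand-state satisfies $Q_1,Q_2\leq$ these fixed totals, which bounds all four of its labels; the internal labels $\textbf{r}$, bounded in terms of the (now bounded) states by $\mathcal{S}(\textbf{r})$ and \eqref{eq:output-input-1}--\eqref{eq:output-input-2}, then make the state sum finite.

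Second, each $R$-matrix entry lies, by Lemma \ref{lemma:proof-1}, in a fixed monomial in $x$ and $y$ times $\mathbb{Z}[q,q^{-1},x^{-1},y^{-1}]$. Since this ring is closed under products and the state sum is finite, the $q$-dependence of the whole element is automatically an integral Laurent polynomial; the state-independent Cartan factor $q^{\frac{1}{3}(\log_q^2 x+\log_q x\log_q y+\log_q^2 y)}$ common to every entry can be carried along as an overall normalisation. The non-positivity of the $x$- and $y$-exponents of each contributing configuration follows from the same conservation bookkeeping used in the proof of Lemma \ref{lemma:proof-1}, as all labels are non-negative.

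The remaining, and main, difficulty is to show that the total power of $x$ and of $y$ attached to each configuration is an \emph{integer}, even though Lemma \ref{lemma:proof-1} only guarantees half-integer (for $x$) and quarter-integer (for $y$) exponents at an individual crossing. My approach would exploit that the element is a diagonal matrix coefficient of a single basis vector: for any $D=d^{\otimes s}$ with $d$ diagonal in the chosen basis, conjugating every $R$-matrix by $d\otimes d$ replaces $\beta_K$ by $D^{-1}\beta_K D$, and $\langle u|\beta_K|u\rangle=\langle u|D^{-1}\beta_K D|u\rangle$ is unchanged when $u=|\textbf{0},\textbf{i}_2,\dots,\textbf{i}_s\rangle$ is a basis vector. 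Taking $d\,w_{\textbf{a}}=x^{p(\textbf{a})}y^{r(\textbf{a})}w_{\textbf{a}}$, such a conjugation shifts the prefactor exponents of each entry by $p(\textbf{a}')+p(\textbf{b}')-p(\textbf{a})-p(\textbf{b})$ (and likewise for $r$), so it suffices to produce state-functions $p,r$ clearing the fractional parts of the prefactors of Lemma \ref{lemma:proof-1}. Equivalently, and perhaps more transparently, one traverses the single closed loop obtained by closing $\beta_K$ along the arcs carrying the fixed states $\textbf{i}_k$ and checks that the half/quarter-integer contributions telescope to zero modulo $\mathbb{Z}$ around the loop; the conservation relations \eqref{eq:conservation-rels} together with the fact that the closure is a \emph{knot}, i.e. a single component, are exactly what should force this cancellation.

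Verifying that the explicit prefactor of Lemma \ref{lemma:proof-1} is a coboundary in this sense --- constructing $p,r$, or equivalently establishing the requisite parity of $\sum_e Q_1(e)$ and its $y$-analogue over the edges of the closed diagram --- is the technical heart of the argument and the step I expect to be the main obstacle, since the over/under asymmetry of the prefactor means it does not telescope from the two conservation laws alone. Once this is in hand, each configuration lands in $\mathbb{Z}[q,q^{-1},x^{-1},y^{-1}]$ and, by finiteness, so does their sum, which also supplies the convergence of the reduced quantum trace asserted in Theorem \ref{thm:FK-SU(3)}.
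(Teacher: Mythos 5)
Your overall plan---expand into a finite state sum, apply Lemma \ref{lemma:proof-1} crossing by crossing, and then argue that the fractional $x$- and $y$-exponents cancel around the closed diagram---has the right shape, and your finiteness and $q$-integrality observations are sound. But the proposal contains a genuine gap exactly where you flag one: you never establish that the half- and quarter-integer exponents actually telescope to integers, and that cancellation \emph{is} the content of the lemma rather than a peripheral technicality, so declaring it ``the step I expect to be the main obstacle'' leaves the statement unproven. The paper closes this step with a concrete pairing argument: writing the per-crossing $y$-exponent as in \eqref{eq:y-exponent}, it notes that every weight label in the braid is incident to exactly two crossings once the top and bottom of the braid are identified (this is where the hypothesis that the closure is a \emph{knot}, a single component, enters), so the two quarter-integer contributions attached to each label combine into half-integers; the conservation relations \eqref{eq:conservation-rels}, again combined with the top/bottom identification, then force an even number of such contributions per label component, yielding integral exponents. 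Your diagonal gauge conjugation (constructing $p,r$ so that $\langle u|\beta_K|u\rangle=\langle u|D^{-1}\beta_K D|u\rangle$ clears the fractional prefactors) is a legitimate reformulation, but exhibiting $p$ and $r$ amounts to exactly this bookkeeping, which you have not carried out; it is not an alternative to it.

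There is also a second, unacknowledged gap: your claim that non-positivity of the exponents ``follows \ldots as all labels are non-negative'' is false as stated for the $y$ variable. Both the prefactor in Lemma \ref{lemma:proof-1} and the per-crossing exponent \eqref{eq:y-exponent} contain the label $a_2$ with a \emph{positive} sign, so an individual crossing can contribute a positive power of $y$, and non-positivity of the total exponent is not a termwise statement. The paper handles this with the same pairing mechanism: the $+\tfrac14 a_2$ contribution of a crossing $\sigma_k$ is absorbed by the $-\tfrac34 a_2$ contribution of the adjacent crossing $\sigma_{k-1}$ sharing that strand, or, for strands reaching the bottom of the braid, by the first crossing meeting that strand from the top after closure (the $x$ variable, by contrast, genuinely reduces to the symmetric-case argument of \cite{gruen22}). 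Since both gaps are resolved by the same strand-pairing analysis, your proof is one substantial lemma short rather than structurally wrong; but as it stands it does not prove the statement.
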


\begin{proof}
As a consequence of Lemma \ref{lemma:proof-1}, the $x$ variable has the same behaviour as in the symmetric case, for which we refer the reader to \cite{gruen22}. On the other hand, the $y$ variable requires a more subtle analysis. Notice that the $y$-exponent is now a multiple of $\frac14$ instead of $\frac12$ and has a positive contribution coming from each crossing. Up to an overall $\frac{1}{2}$ factor, the contribution to the $y$-exponent given by a positive crossing going from the state $|\textbf{a},\textbf{b}\rangle$ to $|\textbf{b}',\textbf{a}'\rangle$ is 
\begin{equation}\label{eq:y-exponent}
    -\frac{1}{2}\left(3\,a'_2+b_2'+b_2-a_2\right)-a_3'-b_3
\end{equation}

The fact that there are no positive nor fractional powers contributions in the overall $y$-exponent (up to a $\frac12$ factor) follows from studying the movement of the second component of the weights inside the braid, as well as the identification of the bottom and top strands.

First, we notice that the second component of the labels of all four states $\textbf{a},\textbf{b},\textbf{a}'$ and $\textbf{b}'$ make an appearance in \eqref{eq:y-exponent}. Since each weight in the braid is associated to exactly two crossings, after identifying the ends of the braid, the two contributions will add up to guarantee the $y$-exponent to be in $\frac12\Z$.

This same pairing also guarantees that the overall exponent is actually in $-\frac12\N$. For each crossing $\sigma_k$, the positive contribution to the $y$-exponent $\frac{1}{4}a_2$ is weighted out by the negative contribution $-\frac{3}{4}a_2$ coming from the previous $\sigma_{k-1}$. In the case when the $k^\text{th}$ strand connects to the bottom of the braid, the contribution will get cancelled by the first crossing which connects to the $k^\text{th}$ strand from the top, which introduces either $-\frac{1}{4}a_2$ (if it corresponds to the understrand) or $-\frac34 a_2$ (if it corresponds to the overstrand).

Once this pairing has been done, the overall $y$-exponent will have contributions parallel to those of the overall $x$-exponent, except that now the labels' contributions come from the second and third components instead of the first and fourth. Applying the conservation relations in $\eqref{eq:conservation-rels}$, together with the identification of the bottom and top strands, guarantees that there will be an even number of contributions for each label component. As a result, the final series will have integral exponents in $x^{-1}$ and $y^{-1}$.
\end{proof}

\begin{lema}\label{lemma:proof-3} The product $$x^{\frac12\left(i_{2,1}+i_{2,4}+\dots+i_{s,1}+i_{s,4}\right)}\,y^{\frac12\left(i_{2,2}+i_{2,3}+\dots+i_{s,2}+i_{s,3}\right)}\langle \textbf{0},\textbf{i}_2,\dots,\textbf{i}_s|\beta_K|\textbf{0},\textbf{i}_2,\dots,\textbf{i}_s\rangle$$ is an element of $\mathbb{Z}[q,q^{-1},x^{-1},y^{-1}]$.
\end{lema}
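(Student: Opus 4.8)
The plan is to bound the highest powers of $x$ and $y$ in the tensor element $E:=\langle \textbf{0},\textbf{i}_2,\dots,\textbf{i}_s|\beta_K|\textbf{0},\textbf{i}_2,\dots,\textbf{i}_s\rangle$ one configuration at a time and then compare against the multiplier. Expanding $\beta_K$ as a product of $R$-matrices, each choice of internal labels contributes a product of one-crossing factors, and by Lemma \ref{lemma:proof-1} the leading $x$- and $y$-powers of such a product are the sums over crossings of the per-crossing prefactor exponents. Since $E$ is a finite sum of such products (Lemma \ref{lemma:proof-2}), its highest $x$-power equals the maximum over configurations of $-\tfrac12\sum_{\text{crossings}}(a_1'+a_4'+b_1+b_4)$, and analogously for $y$ with $-\tfrac12\sum_{\text{crossings}}(\tfrac32 a_2'+a_3'+\tfrac12 b_2+b_3+\tfrac12(b_2'-a_2))$. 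Since $M$ carries only non-negative powers, the claim $M\cdot E\in\mathbb{Z}[q,q^{-1},x^{-1},y^{-1}]$ is equivalent to the statement that, for \emph{every} configuration, $\sum_{\text{crossings}}(a_1'+a_4'+b_1+b_4)\ge\sum_k(i_{k,1}+i_{k,4})$ and $\sum_{\text{crossings}}(\tfrac32 a_2'+a_3'+\tfrac12 b_2+b_3+\tfrac12(b_2'-a_2))\ge\sum_k(i_{k,2}+i_{k,3})$.

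To organize this I would assign to each arc $\gamma$ of the closed braid diagram the two charges $Q_x(\gamma)=\gamma_1+\gamma_4$ and $Q_y(\gamma)=\gamma_2+\gamma_3$. The conservation relations \eqref{eq:conservation-rels} say precisely that $Q_x$ and $Q_y$ are conserved across each crossing, while \eqref{eq:output-input-1}--\eqref{eq:output-input-2} (with $\textbf{a}'\le\textbf{a}$ and $\textbf{b}'\ge\textbf{b}$ componentwise) say that at each crossing the overstrand loses and the understrand gains a non-negative amount of each charge. Regrouping the per-crossing prefactors by arc — each arc being the output of one crossing and the input of the next, with boundary arcs glued through the closure — the total $x$-exponent becomes $-\tfrac12\sum_\gamma m(\gamma)Q_x(\gamma)$ and the total $y$-exponent $-\tfrac12\sum_\gamma m(\gamma)Q_y(\gamma)$, for the same multiplicity $m(\gamma)\in\{0,1,2\}$ recording whether $\gamma$ leaves its source crossing as the overstrand and whether it enters its target crossing as the understrand. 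This is exactly the pairing used in the proof of Lemma \ref{lemma:proof-2}, now tracking the degree of the $1,4$-components (for $x$) and the $2,3$-components (for $y$) rather than mere integrality; in particular it reproduces the quarter-to-half-integer cancellation of the $y$-exponent there.

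The problem thus reduces to the combinatorial inequality $\sum_\gamma m(\gamma)Q(\gamma)\ge\sum_{\text{boundary arcs}}Q$ for each of $Q\in\{Q_x,Q_y\}$. Writing the left side as $\sum_{\text{boundary}}Q+\sum_{\text{internal}}m(\gamma)Q(\gamma)+\sum_{\text{boundary}}(m(\gamma)-1)Q(\gamma)$, the internal contributions are non-negative, so the only possible obstruction comes from boundary arcs with $m(\gamma)=0$, namely those that leave their top crossing as the understrand and re-enter their bottom crossing as the overstrand after the closure. The hard part will be to rule these out, or to absorb their charge, using the structure of a braid closure: because the strands run monotonically and admit no turnbacks, an arc cannot join two corners of a single crossing, so each unit of charge resting on a boundary arc must appear as an understrand-input or an overstrand-output at some crossing and is therefore witnessed in the sum. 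The open strand carries $\textbf{i}_1=\textbf{0}$ and hence zero charge, which is precisely why it is excluded from $M$ and why it may play the role of the distinguished $m(\gamma)=0$ arc at no cost. I expect this closure/monotonicity bookkeeping to be the genuinely new ingredient beyond Lemma \ref{lemma:proof-2} and the main difficulty of the argument.

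Once the inequality is in hand for both $Q_x$ and $Q_y$, the highest $x$- and $y$-powers of $E$ are at most $-\tfrac12\sum_k(i_{k,1}+i_{k,4})$ and $-\tfrac12\sum_k(i_{k,2}+i_{k,3})$ respectively. Multiplying by $M=x^{\frac12(i_{2,1}+i_{2,4}+\dots+i_{s,1}+i_{s,4})}\,y^{\frac12(i_{2,2}+i_{2,3}+\dots+i_{s,2}+i_{s,3})}$ then leaves only non-positive powers of $x$ and $y$ (the $\textbf{i}_1=\textbf{0}$ term contributing nothing), so $M\cdot E\in\mathbb{Z}[q,q^{-1},x^{-1},y^{-1}]$, as claimed. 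As a byproduct, the dependence of the exponent bound on $\sum_k(i_{k,1}+i_{k,4})$ and $\sum_k(i_{k,2}+i_{k,3})$ is exactly what forces each fixed monomial $x^{-p}y^{-r}$ to receive contributions from only finitely many $\textbf{i}_2,\dots,\textbf{i}_s$, which is the convergence statement needed in Theorem \ref{thm:FK-SU(3)}.
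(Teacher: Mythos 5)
Your reduction is sound as far as it goes, and part of it is genuinely nice: by Lemma \ref{lemma:proof-1} the per-crossing prefactors bound the $x$- and $y$-degrees of every configuration, and your arc-wise regrouping with charges $Q_x(\gamma)=\gamma_1+\gamma_4$, $Q_y(\gamma)=\gamma_2+\gamma_3$ and multiplicity $m(\gamma)\in\{0,1,2\}$ (one unit for leaving the source crossing as the overstrand, one for entering the target crossing as the understrand) is verifiably correct — including for $y$, where the uneven coefficients $\tfrac32 a_2'$, $\tfrac12 b_2$, $\tfrac12 b_2'$ and the negative term $-\tfrac12 a_2$ in the exponent of Lemma \ref{lemma:proof-1} really do recombine, arc by arc, into $-\tfrac12\,m(\gamma)\,Q_y(\gamma)$. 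This is a cleaner, unified formulation of the pairing bookkeeping that the paper carries out crossing by crossing, and it correctly identifies what must be proved: $\sum_\gamma m(\gamma)Q(\gamma)\ge\sum_{\text{boundary}}Q(\gamma)$ for $Q\in\{Q_x,Q_y\}$.

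However, your proof stops exactly where the lemma's actual content begins: you explicitly defer the inequality (``the hard part will be to rule these out, or to absorb their charge''), and the one sentence offered in its place is not a proof. Saying that each unit of charge on a boundary arc ``must appear as an understrand-input or an overstrand-output at some crossing and is therefore witnessed in the sum'' does not explain why distinct deficient boundary arcs (those with $m(\gamma)=0$) cannot claim the same witness, i.e., why the total deficit is absorbed without double counting. More tellingly, your heuristic (strands run monotonically, no turnbacks, the open strand carries $\textbf{0}$) never invokes the hypothesis that the closure of $\beta_K$ is a knot, yet the inequality is false without it: for $\sigma_1\in B_3$, whose closure is a two-component link, the boundary arc of the third strand meets no crossing, so it contributes arbitrary charge to the right-hand side and nothing to the left, and correspondingly the quantum trace diverges. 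Any correct argument must use, as the paper does, that every generator $\sigma_{k-1}$ occurs in the braid word; the paper's proof then proceeds by a two-case analysis on the crossings below the first occurrence of $\sigma_{k-1}$ (Figures \ref{fig:lemma-case-1} and \ref{fig:lemma-case-2}), with a kaleidoscopic cancellation of the positive contributions and a $j$-fold application of the conservation relations \eqref{eq:conservation-rels}, to bound the contribution of each $\textbf{i}_k$ by $-\tfrac12(i_{k,2}+i_{k,3})$ (and similarly for $x$, quoting the symmetric case of \cite{gruen22}). Your charge formalism could likely streamline that case analysis, but until you actually carry it out, the central combinatorial step of the lemma is missing, so the proposal has a genuine gap.
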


\begin{proof} 
We will prove the statement by showing independently that the $x^{-1}$--exponent and $y^{-1}$-exponent in the tensor element $\langle \textbf{0},\textbf{i}_2,\dots,\textbf{i}_s|\beta_K|\textbf{0},\textbf{i}_2,\dots,\textbf{i}_s\rangle$ are bounded from below by $\frac12\left(i_{2,1}+i_{2,4}+\dots+i_{s,1}+i_{s,4}\right)$ and $\frac12\left(i_{2,2}+i_{2,3}+\dots+i_{s,2}+i_{s,3}\right)$ respectively.

As in Lemma \ref{lemma:proof-2}, the $x$ variable can be bounded by the same argument as in the symmetric case. Thus, we will focus only in the $y$ variable, by showing that each $\textbf{i}_k$ adds a contribution to the $y$-exponent which is bounded by $-\frac12\left(i_{k,2}+i_{k,3}\right)$. The proof uses that every $\sigma_{k-1}$ appears in the braid, which is guaranteed by the condition that the closure of the braid is a knot.

Assume that there are no $\sigma_{k}$ before the first $\sigma_{k-1}$ appearance, such that the bottom right strand of that first $\sigma_{k-1}$ is $\textbf{i}_k$ as in Figure \ref{fig:lemma-case-1}. Recall that the positive contribution of the bottom left strand of the $\sigma_{k-1}$ will be cancelled out either by a $\sigma_{k-2}$ crossing to its left or, in the case there are none, by one of the top crossings. The overall $y^{-1}$-exponent coming from that $\sigma_{k-1}$ will be greater than $\frac14 i_{k,2}+\frac12i_{k,3}$. The top crossing will contribute either $\frac34 i_{k,2}$ or $\frac14 i_{k,2}$, giving the lower bound $\frac12\left(i_{k,2}+i_{k,3}\right)$ as desired.

\begin{figure}
\centering
\begin{minipage}{.525\textwidth}
  \centering
\begin{subfigure}{.45\textwidth}
  \centering
  \includegraphics[height=7cm,keepaspectratio]{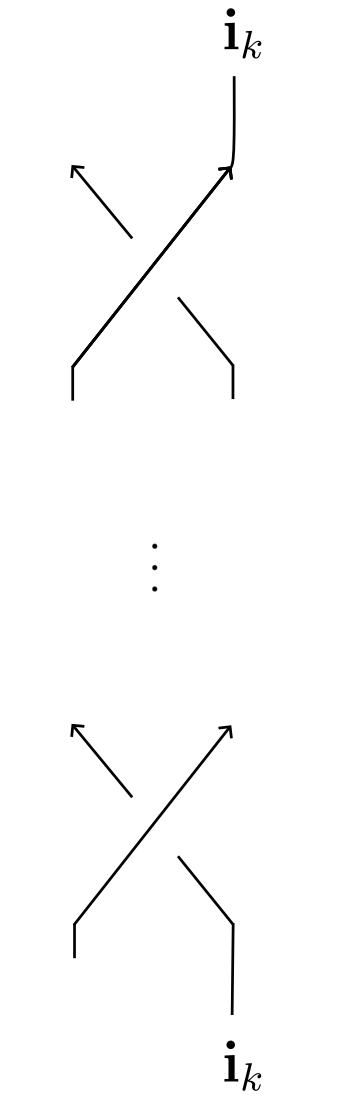}
  \label{fig:sub1}
\end{subfigure}
\begin{subfigure}{.45\textwidth}
  \centering
  \includegraphics[height=7cm,keepaspectratio]{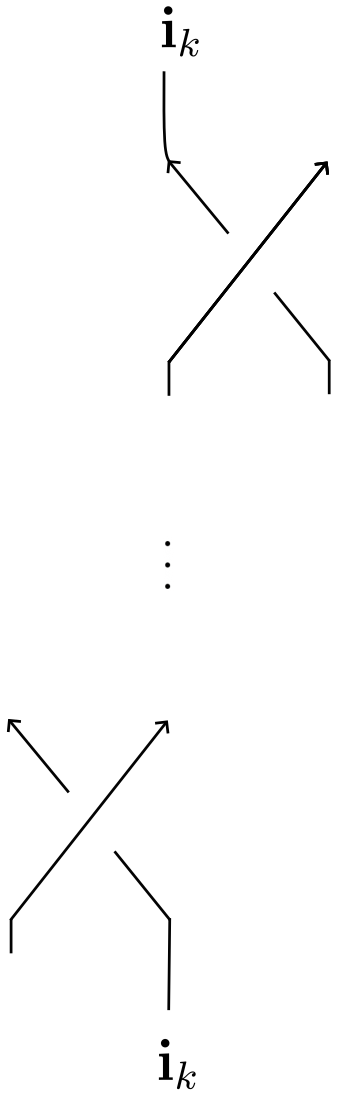}
  \label{fig:sub2}
\end{subfigure}
\caption{The two possibilities when there are no appearances of $\sigma_k$ before the first $\sigma_{k-1}$.}
\label{fig:lemma-case-1}
\end{minipage}
\hfill
\begin{minipage}{.425\textwidth}
  \centering
  \includegraphics[height=7cm,keepaspectratio]{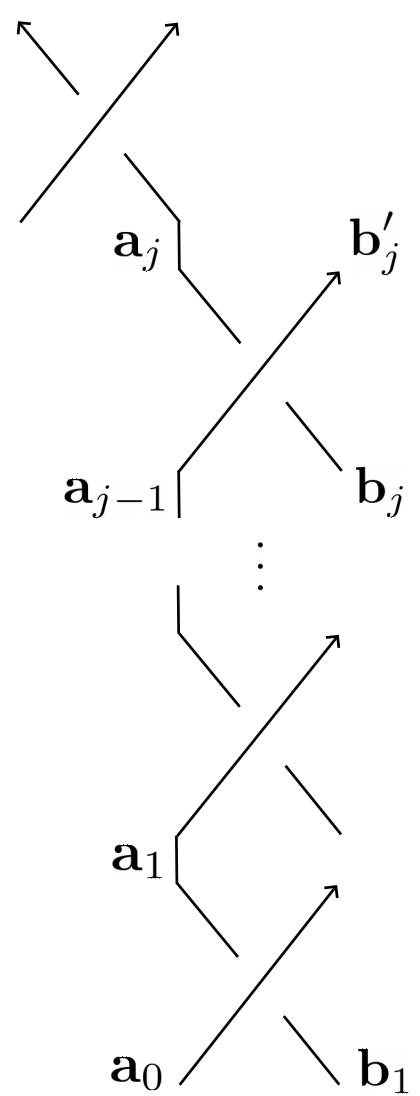}
  \captionof{figure}{$j$ appearances of $\sigma_k$ before the first $\sigma_{k-1}$.}
  \label{fig:lemma-case-2}
\end{minipage}
\end{figure}

For the second scenario, assume there are $j$ appearances of $\sigma_{k}$ before the first $\sigma_{k-1}$ as in Figure \ref{fig:lemma-case-2}. Let the $i^\text{th}$ $\sigma_{k}$ go from $|\textbf{a}_{i-1},\textbf{b}_i\rangle$ to $|\textbf{a}_{i},\textbf{b}_i'\rangle$, where $\textbf{a}_0=\textbf{i}_k$. Its contribution to the $y^{-1}$-exponent is $\frac12\left(\frac12\left(3\, b_{i,2}'+b_{i,2}\right)+\left(b_{i,3}'+b_{i,3} \right)+\frac12\left(a_{i,2}-a_{i-1,2}\right)\right)$. Considering the $j$ crossings altogether, the positive contributions cancel kaleidoscopically and the $y^{-1}$-exponent is bounded from below by 
\begin{equation}\label{eq:lemma-proof-3-case-2}
    \frac12\left(\sum_{i=1}^{j} b_{i,2}'+ b_{i,3}'\right) + \frac14 a_{j,2},
\end{equation}
where we have omitted $-\frac14 a_{0,2}$ as it will get canceled out by the contributions from the top crossings. 

Adding the bottom right strand contribution of the $\sigma_{k-1}$, $\frac14 a_{j,2}+\frac12 a_{j,3}$, we can use $j$ times the conservation law in \eqref{eq:conservation-rels} to bound \eqref{eq:lemma-proof-3-case-2} uniquely by $a_0$, leading to an overall $y^{-1}$ lower bound of $\frac12 \left(a_{0,2}+a_{0,3}\right)=\frac12 \left(i_{k,2}+i_{k,3}\right)$ as desired.
\end{proof}

The reduced quantum trace $\widetilde{\text{Tr}}^q_{V_x^1\otimes V_y^2} (\beta_K)$ is computed as
\begin{equation*}
    \widetilde{\text{Tr}}^q_{V_x^1\otimes V_y^2} (\beta_K) = \sum_{|\textbf{0},\textit{i}_2,\dots,\textbf{i}_s\rangle} q^{-\sum_{k=2}^s |\textbf{i}_k|} \, \langle \textbf{0},\textbf{i}_2,\dots,\textbf{i}_s|\beta_K|\textbf{0},\textbf{i}_2,\dots,\textbf{i}_s\rangle
\end{equation*}
As a direct conclusion of Lemma \ref{lemma:proof-3}, only a finite number of input states $|\textbf{0},\textbf{i}_2,\dots,\textbf{i}_s\rangle$ contribute to each exponent of $x^{-1}$ and $y^{-1}$ in the reduced quantum trace. Moreover, for each input state, the summations over internal labels in $\langle \textbf{0},\textbf{i}_2,\dots,\textbf{i}_s|\beta_K|\textbf{0},\textbf{i}_2,\dots,\textbf{i}_s\rangle$ are also finite, due to the conservation relations \eqref{eq:conservation-rels} and the expression of internal labels in terms of the incoming and outgoing states (\ref{eq:Sr}, \ref{eq:output-input-1}, \ref{eq:output-input-2}). This altogether shows the convergence of the reduced quantum trace.

\section{Examples}\label{sec:examples}

In order to compute $F_K^{\sl_3}$ for some examples, it is more suitable to express the quantum trace as a sum of matrix elements. For notational ease, denote by $R_{\textbf{a},\textbf{b}}^{\textbf{a'},\textbf{b'}}$ the coefficient $\sum_{\mathcal{S}(\textbf{r})}R(|\textbf{a},\textbf{b}\rangle,\textbf{r})$ accompanying $|\textbf{a'},\textbf{b'}\rangle$ in \eqref{eq:R-matrix-general}.

\begin{figure}[h!]
    \centering
    \includegraphics[width=0.35\linewidth]{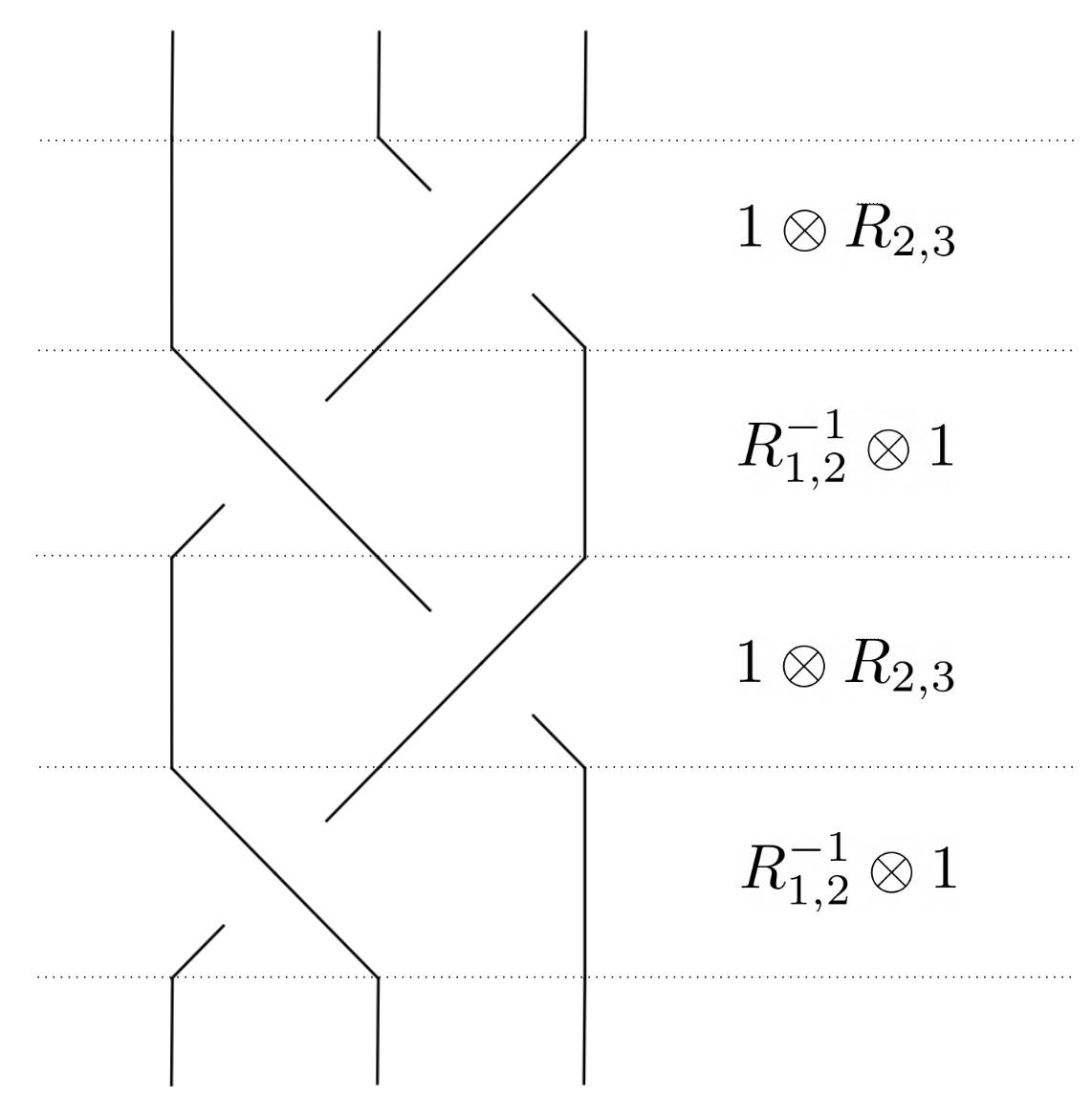}
    \caption{The braid $\sigma_1^{-1}\sigma_2\sigma_1^{-1}\sigma_2$ is divided into 4 segments, each including one of the crossings. To the right, we write the maps associated to each level, which are being composed in the quantum trace.}
    \label{fig:levels}
\end{figure}

Given a braid diagram, divide your braid into as many segments as crossings, such that each segment (which we call \textit{level}) features only one of the crossings. Assign the state $|\textbf{0}\rangle$ to the left incoming strand (when read from the bottom up) and label each crossing's incoming and outgoing strand of the braid diagram. The strands are labeled so that the total sum of labels is constant at each component and level, and the bounds in \eqref{eq:output-input-1}--\eqref{eq:output-input-2} are satisfied. The tensor element $\langle \textbf{0},\textbf{i}_2,\dots,\textbf{i}_s|\beta_K|\textbf{0},\textbf{i}_2,\dots,\textbf{i}_s\rangle$ is the product of the $R$-matrix elements for each incoming and outcoming state at each level.

\subsection{Trefoil}

Let us exemplify the procedure with the simplest knot, the trefoil. Using the braid diagram $\beta_{3_1^{r}}=\sigma_1^3$, the element $\langle \textbf{0},\textbf{b}|\beta_{3_1^{r}}|\textbf{0},\textbf{b}\rangle$ is
$$R_{\textbf{0},\textbf{b}}^{\textbf{b},\textbf{0}}R_{\textbf{b},\textbf{0}}^{\textbf{b},\textbf{0}}R_{\textbf{b},\textbf{0}}^{\textbf{0},\textbf{b}}.$$
The first and third $R$-matrix elements are just monomials in $x$ and $y$,
$$R_{\textbf{0},\textbf{b}}^{\textbf{b},\textbf{0}} = R_{\textbf{b},\textbf{0}}^{\textbf{0},\textbf{b}} = x^{\frac{1}{2}(-b_1-b_4)}y^{\frac{1}{2}(-b_2-b_3)},$$
while the middle $R$-matrix involves a sum over five internal labels.

The reduced quantum trace is
\begin{equation}
    F_{3_1^{r}}^{\sl_3}(x,y,q) = \sum_{\textbf{b}} q^{-|\textbf{b}|} R_{\textbf{0},\textbf{b}}^{\textbf{b},\textbf{0}}R_{\textbf{b},\textbf{0}}^{\textbf{b},\textbf{0}}R_{\textbf{b},\textbf{0}}^{\textbf{0},\textbf{b}}
\end{equation}
where the sum goes over $\textbf{b}\in\left\{|b_1,b_2,b_3,b_4\rangle | b_1\geq b_2\geq 0, b_3\geq b_4\geq 0\right\}$. Once we plug in the correct values for the matrix elements, the result is an element of $\mathbb{Z}[q^{\pm 1}][[x^{-1},y^{-1}]]$:
\begin{equation}\label{eq:trefoil-1}
\begin{split}
    F_{3_1^{r}}^{\sl_3}(x,y,q)&=\frac{1}{x^2y^2}\left[\left(1+\frac{1}{qy}+\frac{1-q}{(qy)^2}+\frac{-q^2-q+1 }{(qy)^3}\right)\right.
    \\&+\left(1+\frac{2}{qy}+\frac{2-q}{(qy)^2}+\frac{-q^2-2 q+2}{(qy)^3}\right)\frac{1}{qx}
    \\&+\left((1-q)+\frac{2-q}{qy}+\frac{3-2 q}{(qy)^2}-\frac{3 (q-1)}{(qy)^3}\right)\frac{1}{(qx)^2} 
    \\&+ \left((-q^2-q+1)+\frac{-q^2-2 q+2}{qy}-\frac{3 (q-1)}{(qy)^2}-\frac{4 (q-1)}{(qy)^3}\right)\frac{1}{(qx)^3} 
    \\&+\left.\dots\right]
\end{split}
\end{equation}

\subsection{Cinquefoil}

Applying the same procedure to the braid diagram $\beta_{5_1}=\sigma_1^5$ gives the $\sl_3$-invariant for the $5_1$ knot.
\begin{equation}\label{eq:cinquefoil}
\begin{split}
    F_{5_1}^{\sl_3}(x,y,q)&= \frac{1}{x^4y^4}\left[\left(1+\frac{1}{qy}+\frac{1-q}{(qy)^2}-\frac{1-q}{(qy)^3}\right)\right.
    \\&+\left(1+\frac{2}{qy}+\frac{2-q}{(qy)^2}+\frac{2(1-q)}{(qy)^3}\right)\frac{1}{qx}
    \\&+\left((1-q)+\frac{2-q}{qy}+\frac{3-2 q}{(qy)^2}+\frac{3(1-q)}{(qy)^3}\right)\frac{1}{(qx)^2}
    \\&+\left.\left((1-q)+\frac{2(1-q)}{qy}+\frac{3(1-q)}{(qy)^2}+\frac{4(1-q)}{(qy)^3}\right)\frac{1}{(qx)^3}+\dots\right]
\end{split}
\end{equation}

\subsection{Comparison with previous work}

In \cite{Park20}, Park computed the series $F_K^{\sl_3}$ for any torus knot $K$. In order to compare results, we need to change from our convention to theirs.

First, we need to shift our $x$ and $y$ variables via $x\mapsto q^{-1}\,x$, $y\mapsto q^{-1}\,y$.

Notice that the invariant in \cite{Park20} is written in balanced and unreduced expression, whereas above we write the invariant in negative reduced expression.  Shifting from reduced to unreduced corresponds to multiplying by the normalization factor
\begin{equation*}
   (x^{\frac12}- x^{-\frac12})(y^\frac12 - y^{-\frac12})((xy)^{-\frac12} - (xy)^{\frac12}).
\end{equation*}

To go from the negative expression to the balanced one, we need to impose the $SL(3,\C)$ Weyl symmetry, given by the action of the symmetric group of degree three $S_3$ on the vector of variables $\textbf{x}=(x,y,(xy)^{-1})$.

If we denote 
\begin{equation}
    F_K^{\sl_3,neg}(x,y,q):=-(x^{\frac12}- x^{-\frac12})(y^\frac12 - y^{-\frac12})((xy)^{-\frac12} - (xy)^{\frac12}) \, F_K^{\sl_3}(q\,x,q\,y,q)
\end{equation}
the negative expansion of the unreduced invariant, and $F_K^{\sl_3,pos}(x,y,q):=F_K^{\sl_3,neg}(x^{-1},y^{-1},q)$ the associated positive expansion, the balanced $F_K^{\sl_3}$ is given by
\begin{equation}
    F_K^{\sl_3,balanced}(x,y,q)=\frac{1}{2}\left(-\sum_{s\in S_3} F_K^{\sl_3,neg}(\pi(s(\textbf{x})) + \sum_{s\in S_3} F_K^{\sl_3,pos}(\pi(s(\textbf{x}))\right)
\end{equation}
where $\pi(\textbf{x})$ is the projection on the first two components of $\textbf{x}$. The normalization by a factor of $\frac12$ comes from our original series already showing symmetry in $x$ and $y$.

\textbf{Examples}

Reducing $F_{3_1^r}^{\sl_3}$ to the symmetric variant $F_{3_1^r}^{\sl_3,sym}$ studied in \cite{EGGKPS,gruen22}, simply involves setting $y = 1$ and using the Weyl symmetry $x^{-1}\rightarrow q^3 x$. This gives, up to overall $x,y$ and $q$ power:
\begin{equation*}\begin{split}
    F_{3_1^r}^{\sl_3}(q^{-3}x^{-1},1,q) =1 + (1 + q) (qx) + (1 + q - q^3) (q x)^2 + 
 (1 + q - q^3 - 2 q^4 - q^5) (qx)^3 + \dots   
\end{split}
\end{equation*}

Similarly, $F_{5_1^r}^{\sl_3}$ reduces to $F_{5_1^r}^{\sl_3,sym}$:
\begin{equation*}
    F_{5_1^r}^{\sl_3}(q^{-3}x^{-1},1,q) = 1 + (1+q)(q x) + (1+q-q^3)(qx)^2 + (1+q-q^3-q^4)(q x)^3+\dots
\end{equation*}
which matches the result in Table 2 of \cite{gruen22}, up to overall $x,y$ and $q$ power.

In order to compare with Park, we first expand the invariant as a series in $q^{-1}$. In the case of the right-hand trefoil, Equation \eqref{eq:trefoil-1}  yields
\begin{equation*}
    \begin{aligned}
        F_{3_1^r}^{\sl_3}(x^{-1},y^{-1},q) = \,&x^2 y^2 + \frac{1}{q} \left(-x^5 y^2-x^4 y^2+x^3 y^2-x^2 y^5-x^2 y^4+x^2 y^3\right) \\
        &+ \frac{1}{q^2} \left(-x^6 y^2-x^5 y^3-x^5 y^2-x^4 y^3+x^4 y^2-x^3 y^5-x^3 y^4+2 x^3 y^3\right.\\
        &\quad\left.-x^2 y^6-x^2 y^5+x^2 y^4\right) \\
        &+ \frac{1}{q^3} \left(x^6 y^4-2 x^6 y^3-x^6 y^2-2 x^5 y^3+x^5 y^2+x^4 y^6\right.\\
        &\quad\left.-2 x^4 y^4+2 x^4 y^3-2 x^3 y^6-2 x^3 y^5+2 x^3 y^4-x^2 y^6+x^2 y^5\right) \\
        &+ \frac{1}{q^4} \left(-x^6 y^6+x^6 y^5-x^6 y^4-2 x^6 y^3+x^6 y^2+x^5 y^6\right.\\
        &\quad\left.-3 x^5 y^4+2 x^5 y^3-x^4 y^6-3 x^4 y^5+3 x^4 y^4-2 x^3 y^6\right.\\
        &\quad\left.+2 x^3 y^5+x^2 y^6\right) \\
        &+ \frac{1}{q^5} \left(2 x^6 y^6-4 x^6 y^4+2 x^6 y^3-4 x^5 y^5+3 x^5 y^4-4 x^4 y^6+3 x^4 y^5\right.\\
        &\quad\left.+2 x^3 y^6\right) + \frac{1}{q^6} \left(-5 x^6 y^5+3 x^6 y^4-5 x^5 y^6+4 x^5 y^5+3 x^4 y^6\right) \\
        &+ \frac{1}{q^7} \left(-6 x^6 y^6+4 x^6 y^5+4 x^5 y^6\right) + \frac{1}{q^8} \left(5 x^6 y^6\right) + \dots
    \end{aligned}
\end{equation*}

After renormalising the variables $x$ and $y$, multiplying by the normalization factor and dropping higher order exponents, the negative expansion turns into
\begin{equation*}
\begin{split}
    F_{3_1^r}^{\sl_3,pos}(x,y,q)= &q^4\, x y - q^5\, (x^3 y +x y^3) + q^6\, (-x^4 y + x^4 y^3 -x y^4 + x^3 y^4) + 
 q^7 \,(x^5 y^3 + x^3 y^5) \\&- q^8\,x^5 y^5 + \dots
\end{split}
\end{equation*}
Note that the symmetry interchanging $x$ and $y$ is already present. Forcing the $S_3$ symmetry, the positive expansion of $F_K$ contributes
\begin{equation*}
    \begin{split}
        &q^4 \left(x y+\frac{1}{x}+\frac{1}{y}\right) + q^5 \left(-\frac{1}{x^3 y^2}-x^3 y-\frac{1}{x^2 y^3}-\frac{x^2}{y}-x y^3-\frac{y^2}{x}\right) \\
        &+ q^6 \left(x^4 y^3-\frac{1}{x^4 y^3}-x^4 y+\frac{1}{x^4 y}+x^3 y^4-\frac{1}{x^3 y^4}-\frac{x^3}{y}+\frac{y}{x^3}-x y^4+\frac{1}{x y^4}+\frac{x}{y^3}-\frac{y^3}{x}\right) \\
        &+ q^7 \left(x^5 y^3+\frac{1}{x^5 y^2}+x^3 y^5+\frac{y^2}{x^3}+\frac{1}{x^2 y^5}+\frac{x^2}{y^3}\right) + q^8 \left(-x^5 y^5-\frac{1}{x^5}-\frac{1}{y^5}\right) + \dots
    \end{split}
\end{equation*}

Finally, balancing with the positive expansion, one gets the series
\begin{equation*}
    \begin{split}
        &F_{3_1^r}^{\sl_3,balanced}(x,y,q) = q^4 \left(x y-\frac{1}{x y}-x+\frac{1}{x}-y+\frac{1}{y}\right) \\
        &+ q^5 \left(x^3 y^2-\frac{1}{x^3 y^2}-x^3 y+\frac{1}{x^3 y}+x^2 y^3-\frac{1}{x^2 y^3}-\frac{x^2}{y}+\frac{y}{x^2}-x y^3+\frac{1}{x y^3}+\frac{x}{y^2}-\frac{y^2}{x}\right) \\
        &+ 2q^6 \left( x^4 y^3-\frac{1}{x^4 y^3}- x^4 y+\frac{1}{x^4 y}+ x^3 y^4-\frac{1}{x^3 y^4}-\frac{x^3}{y}+\frac{y}{x^3}- x y^4+\frac{2}{x y^4}+\frac{x}{y^3}-\frac{ y^3}{x}\right) \\
        &+ q^7 \left(x^5 y^3-\frac{1}{x^5 y^3}-x^5 y^2+\frac{1}{x^5 y^2}+x^3 y^5-\frac{1}{x^3 y^5}-\frac{x^3}{y^2}+\frac{y^2}{x^3}-x^2 y^5+\frac{1}{x^2 y^5}+\frac{x^2}{y^3}-\frac{y^3}{x^2}\right) \\
        &- q^8 \left( x^5 y^5-\frac{1}{x^5 y^5}-x^5+\frac{1}{x^5}-y^5+\frac{1}{y^5}\right) + \cdots
    \end{split}
\end{equation*}
which matches with the theoretical computation for right-hand trefoil in \cite{Park20}.

\section{Remarks on higher rank}\label{sec:remarks-higher-rank}

The methods presented in this paper can be generalised to any $\sl_N$ by following a similar approach. Each finite-dimensional irreducible representation of $\sl_N$ is labeled by its highest weight; which is expressed in the fundamental weights basis as $(n_1,\dots,n_{N-1})$, where $n_i\in\N$. The irreducible representation $V_{(n_1,\dots,n_{N-1})}$ with the highest weight $(n_1,\dots,n_{N-1})$ is spanned by the vector with weight $(n_1,\dots,n_{N-1})$ of $V_{(n_1,0,\dots,0)}\otimes V_{(0,n_2,\dots,0)}\otimes\cdots\otimes V_{(0,0,\dots,n_{N-1})}$. The argument in Lemma \ref{lemma:RT-inv} extends directly to this case. For an analogue of Theorem \ref{thm:FK-SU(3)}, it would be necessary to explicitly compute the $R$-matrix associated to the tensor product of symmetric Verma modules to prove convergence of the quantum trace. This construction would give $F_K^{\sl_N}(x_1,\dots, x_{N-1},q)$ as defined in \cite{Park20} for positive braid knots.

In practice however formulas will rapidly get too involved. For general $\sl_N$, each crossing colored by the tensor product $V_{(n_1,0,\dots,0)}\otimes V_{(0,n_2,\dots,0)}\otimes\cdots\otimes V_{(0,0,\dots,n_{N-1})}$ is equivalent to $(N-1)^2$ crossings between the respective symmetric representations, whose $R$-matrix is dependent on $\frac{N(N-1)}{2}$ parameters, resulting in $\sim N^4$ parameters per crossing. This makes it hard to compute even the simplest knots by-hand. However, the setback is mainly computational, and we expect that more computational power or theoretical advances would allow for the methods used in \cite{gruen22} to extend to our situation as well.

\bibliographystyle{abbrv}
\bibliography{refs}

\end{document}